\newcommand{\C}{\mathbb{C}}
\newcommand{\R}{\mathbb{R}}
\newcommand{\id}{\mathrm{id}}
\newcommand{\des}{\mathrm{des}}
\newcommand{\sgn}{\mathrm{sgn}}
\newcommand{\cone}{\mathrm{cone}}
\newcommand{\todo}[1]{\par \noindent
  \framebox{\begin{minipage}[c]{0.95 \textwidth} TO DO:
      #1 \end{minipage}}\par}
\newcommand\commentout[1]{}
\newtheorem{theorem}{Theorem}[section]
\newtheorem{proposition}[theorem]{Proposition}
\newtheorem{lemma}[theorem]{Lemma}
\theoremstyle{remark}
\newtheorem{example}[theorem]{Example}
\newtheorem{remark}[theorem]{Remark}
\theoremstyle{definition}
\newtheorem{definition}[theorem]{Definition}
\begin{document}

\title[Hyperoctahedral Eulerian Idempotents\ldots]{Hyperoctahedral Eulerian Idempotents, Hodge Decompositions, and Signed Graph Coloring Complexes}

\author{Benjamin Braun}
\address{Department of Mathematics\\
         University of Kentucky\\
         Lexington, KY 40506}
\email{benjamin.braun@uky.edu}

\author{Sarah Crown Rundell}
\address{Department of Mathematics and Computer Science\\
         100 West College Street\\
         Denison University\\
         Granville, OH, 43023}
\email{rundells@denison.edu}

\date{27 July 2013}

\thanks{The first author is partially supported by the National Security Agency through award H98230-13-1-0240.}

\begin{abstract}
Phil Hanlon proved that the coefficients of the chromatic polynomial of a graph $G$ are equal (up to sign) to the dimensions of the summands in a Hodge-type decomposition of the top homology of the coloring complex for $G$.
We prove a type B analogue of this result for chromatic polynomials of signed graphs using hyperoctahedral Eulerian idempotents.
\end{abstract}

\maketitle


\section{Introduction}

Let $G$ denote a finite graph and $\chi_G(\lambda)$ its chromatic polynomial.
The coloring complex $\Delta_G$ was defined by Einar Steingr\'{\i}msson \cite{sm} in order to provide a Hilbert-polynomial interpretation of $\chi_G(\lambda)$.
While Steingr\'{\i}msson's original definition of $\Delta_G$ was motivated by algebraic considerations, the coloring complex can also be obtained as the link complex for a hyperplane arrangement, using techniques developed by J\"{u}rgen Herzog, Vic Reiner, and Volkmar Welker \cite{hrw}.
Coloring complexes have many interesting properties.
Jakob Jonsson proved \cite{jo} that $\Delta_G$ is homotopy equivalent to a wedge of spheres in fixed dimension, with the number of spheres being one less than the number of acyclic orientations of $G$.
Axel Hultman \cite{hu} proved that $\Delta_G$, and in general any link complex for a sub-arrangement of the type A or type B Coxeter arrangement, is shellable.
Further, $\Delta_G$ admits a convex ear decomposition, as shown by Patricia Hersh and Ed Swartz \cite{hs}.

A fascinating result due to Phil Hanlon \cite{ha} is that (up to sign) the $j$-th coefficient of $\chi_G(\lambda)$ is equal to the dimension of the $j$-th summand in a Hodge-type decomposition of the top homology of $\Delta_G$.
Hanlon's Hodge decomposition is obtained using the Eulerian idempotents in the group algebra of the symmetric group $S_n$.
These are the elements $e_n^{(j)}\in \C[S_n]$ defined by 
\[
e_n(x)=\sum_{j=1}^nx^je_n^{(j)} = \sum_{\pi \in S_n}\binom{x+n-\des(\pi)-1}{n}\sgn (\pi)\pi \, .
\]
These first arose in work of various authors in the 1980's.
Murray Gerstenhaber and Samuel Schack \cite{gs} proved that all splitting sequences for Hochschild homology arise as linear combinations of Eulerian idempotents, which in certain cases coincides with Hodge decompositions for smooth compact complex varieties; similar work was independently introduced by Jean-Louis Loday \cite{lo}.
A nice introduction to these results can be found in another paper due to Hanlon \cite[Section 1]{hahodge}.
The chain complex defining Hochschild homology is quite similar to the chain complex for $\Delta_G$, and thus Hanlon was able to adapt the Eulerian idempotent splitting techniques in Hochschild homology to produce a similar decomposition for the top homology of $\Delta_G$.

The coloring complex construction can be extended to hypergraphs, and Eulerian idempotents continue to play a role in this setting.
Combinatorial and topological properties of hypergraph coloring complexes were investigated by Felix Breuer, Aaron Dall, and Martina Kubitzke \cite{bdk}, who found that many of the nicest properties of graph coloring complexes are lost in the transition to hypergraphs, e.g. Cohen-Macaulayness, partitionability, being a wedge of spheres, etc.
Hypergraph coloring complexes were also considered  by Jane Long and the second author \cite{lr}.
They show that the homology of hypergraph coloring complexes admits a Hodge decomposition induced by Eulerian idempotents, and that the coefficients of the chromatic polynomial of a hypergraph are essentially the Euler characteristics of the Hodge subcomplexes, up to sign.
The second author \cite{r1} investigated the special case of $k$-uniform hypergraphs, showing that their coloring complexes are shellable and that their cyclic coloring complexes have a certain homology group whose dimension is given by a binomial coefficient.

The Eulerian idempotents play key roles in other contexts as well.
Adriano Garsia \cite{ga} and Christophe Reutenauer \cite{re} studied Eulerian idempotents in their work on free Lie algebras.
Persi Diaconis and Jason Fulman \cite{df} show that the Eulerian idempotents are (up to the sign involution) eigenvectors of an ``amazing'' matrix arising from the study of ``carries'' in addition algorithms.
They also show that this matrix is related to the Veronese construction in commutative algebra.
Phil Hanlon and Patricia Hersh \cite{hh} prove that the homology of the complex of injective words admits a Hodge decomposition, where the dimension of the $k$-th Hodge summand is equal to the number of derangements with exactly $k$ cycles.
These and other results are all-the-more fascinating due to their type B extensions.
The type B Eulerian idempotents, defined in Section~\ref{sec:eulerian}, were originally defined by Fran{\c{c}}ois Bergeron and Nantel Bergeron \cite{bb}.
They proved type B extensions of several of the type A results given above.
The Eulerian idempotents in types A and B also play an interesting role in shuffling problems, as discussed in several of the papers just referenced.

Given the variety of interesting applications of Eulerian idempotents, we believe that a type B version of Hanlon's result regarding $\chi_G(\lambda)$ is of interest.
The goal of this paper is to prove Theorem~\ref{thm:chromhodge}, which provides the desired extension in the setting of signed graph chromatic polynomials.
Section~\ref{sec:signed} contains a review of basic properties of signed graphs and their chromatic polynomials.
Section~\ref{sec:complex} discusses signed graph coloring complexes and hyperoctahedral group actions on them.
In Section~\ref{sec:eulerian}, we prove that the type B Eulerian idempotents induce a Hodge-type decomposition on the top homology of each signed graph coloring complex.
In Section~\ref{sec:chromatic} we prove our main result.


\section{Signed graphs and chromatic polynomials}\label{sec:signed}

This section is based on Zaslavsky's papers \cite{zas1, zas2}.

\begin{definition}
A \emph{signed graph} $G$ on the vertex set $[n]$ is a multiset $E$ of one-element subsets of $[n]$, called \emph{half-edges}, and two-element subsets of $[n]$, called \emph{edges}, together with a \emph{sign map} $\sigma:E\cap\binom{[n]}{2}\rightarrow \{1,-1\}$ such that $\sigma^{-1}(1)$ and $\sigma^{-1}(-1)$ are each the edge set of a simple graph on $[n]$.
For an edge $e\in E$, if $\sigma(e)=1$, then $e$ is called a \emph{positive edge} of $G$.
If $\sigma(e)=-1$, then $e$ is called a \emph{negative edge} of $G$.
\end{definition}

\begin{example}\label{ex:main}
Let $G$ have vertex set $\{1,2,3\}$, positive edge $\{1,2\}$, negative edges $\{1,2\}$ and $\{2,3\}$, and half-edge $\{1\}$.
We schematically represent $G$ using solid half-lines and lines for half-edges and positive edges, respectively, and using dotted lines for negative edges, as demonstrated in Figure~\ref{fig:signgraph}.
\end{example}

\begin{figure}[ht]
\begin{center}
\includegraphics{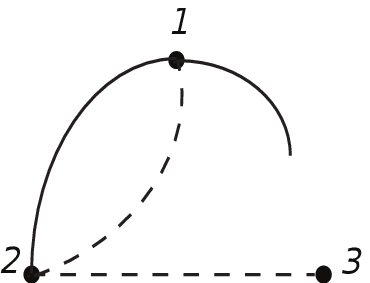}
\caption{}
\label{fig:signgraph}
\end{center}
\end{figure}

\begin{definition}
Let $G$ be a signed graph on $[n]$.  
A \emph{$c$-coloring} of $G$ is a map 
\[
\phi:[n]\rightarrow \{-c,-c+1,\ldots,-1,0,1,\ldots,c\} \, .
\]
A $c$-coloring $\phi$ is \emph{proper} if $\phi(i)\neq \phi(j)$ for all positive edges $\{i,j\}$ in $G$, $\phi(i)\neq -\phi(j)$ for all negative edges $\{i,j\}$ in $G$, and $\phi(i)\neq 0$ for all half-edges $i\in E(G)$.
Denote by $\chi_G(2c+1)$ the number of proper $c$-colorings of $G$.
\end{definition}

\begin{theorem}[Zaslavsky \cite{zas2}]\label{thm:chrompoly}
For $G$ a signed graph on $[n]$, the function $\chi_G(2c+1)$ is given by a polynomial of degree $n$.
\end{theorem}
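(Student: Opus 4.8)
The plan is to realize the proper-coloring count as a lattice-point count for a type B hyperplane arrangement, and then extract both polynomiality and the degree from the associated characteristic polynomial via M\"obius inversion. To the signed graph $G$ on $[n]$ I would associate the arrangement $\mathcal{A}_G$ in $\R^n$ consisting of the hyperplane $x_i = x_j$ for each positive edge $\{i,j\}$, the hyperplane $x_i = -x_j$ for each negative edge $\{i,j\}$, and the hyperplane $x_i = 0$ for each half-edge $i$; these are exactly the sub-arrangements of the type B Coxeter arrangement alluded to in the introduction. Writing $t = 2c+1$ and identifying a $c$-coloring $\phi$ with the integer point $(\phi(1),\ldots,\phi(n)) \in \{-c,\ldots,c\}^n$, the definition of \emph{proper} shows immediately that $\chi_G(t)$ equals the number of points of the cube $\{-c,\ldots,c\}^n$ lying on none of the hyperplanes of $\mathcal{A}_G$.

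First I would apply inclusion-exclusion over the hyperplanes, obtaining $\chi_G(t) = \sum_{\mathcal{B} \subseteq \mathcal{A}_G} (-1)^{|\mathcal{B}|} N\bigl(\bigcap \mathcal{B}\bigr)$, where $N(X)$ denotes the number of points of $\{-c,\ldots,c\}^n$ lying in an affine subspace $X$. Grouping the subsets $\mathcal{B}$ according to the flat $F = \bigcap_{H \in \mathcal{B}} H$ they cut out, and recognizing that the internal alternating sum over those $\mathcal{B}$ with a fixed intersection $F$ is exactly the M\"obius function of the intersection lattice $L(\mathcal{A}_G)$ (ordered by reverse inclusion, with $\hat{0} = \R^n$), this rearranges into the Whitney-type sum $\chi_G(t) = \sum_{F \in L(\mathcal{A}_G)} \mu(\hat{0}, F)\, N(F)$.

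The main work is the evaluation $N(F) = t^{\dim F}$, and this is where the type B structure enters. Every flat of $\mathcal{A}_G$ is the solution set of a system of equations of the forms $x_i = \pm x_j$ and $x_i = 0$, so it is encoded by a partition of $[n]$ into a (possibly empty) \emph{zero block}, on which all coordinates vanish, together with blocks on each of which the coordinates are equal up to a consistent choice of signs. Because $\{-c,\ldots,c\}$ is closed under negation and contains $0$, a point of the cube lies on $F$ precisely when one freely chooses the value of a single representative coordinate in each non-zero block from the $2c+1 = t$ available values and forces $0$ on the zero block; this produces every such point exactly once. Since the number of non-zero blocks equals $\dim F$, we get $N(F) = t^{\dim F}$. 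I expect this to be the one genuinely delicate step: one must confirm that the sign relations within a block are consistent on a true flat, i.e. that no block is forced to satisfy both $x_i = x_j$ and $x_i = -x_j$ (which would collapse it into the zero block via $x_i = -x_i$), but this is automatic since $F$, being an actual intersection of hyperplanes, is nonempty.

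Substituting gives $\chi_G(t) = \sum_{F \in L(\mathcal{A}_G)} \mu(\hat{0}, F)\, t^{\dim F}$, which is exactly the characteristic polynomial of $\mathcal{A}_G$ and is manifestly a polynomial in $t = 2c+1$. Its degree is $n$ because the unique top flat $F = \R^n$ contributes $\mu(\hat{0},\hat{0})\, t^n = t^n$ while every other flat has strictly smaller dimension, so the leading term is $t^n$ with coefficient $1$. The inclusion-exclusion and M\"obius-inversion steps are then routine, and the substantive content is concentrated entirely in the bijective count $N(F) = t^{\dim F}$, whose validity rests on the negation-symmetry of the color set $\{-c,\ldots,c\}$.
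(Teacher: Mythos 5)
Your argument is correct, but it takes a genuinely different route from the one the paper relies on. The paper, following Zaslavsky, treats polynomiality via deletion--contraction: Section~\ref{sec:signed} develops switching, deletion, and contraction precisely so that the recursion $\chi_G(2c+1)=\chi_{G\setminus e}(2c+1)-\chi_{G/e}(2c+1)$ can drive an induction (deletion reduces the number of edges, contraction the number of vertices) whose base case, the edgeless graph, has $(2c+1)^n$ proper colorings; that same recursion is then reused as the engine of the inductive proof of Theorem~\ref{thm:chromhodge}. You instead identify $\chi_G(t)$, $t=2c+1$, with the characteristic polynomial of the signed graphic arrangement $\mathcal{B}_G$ by inclusion--exclusion over flats, with the substantive step being the lattice-point count $N(F)=t^{\dim F}$, which you correctly isolate and which indeed hinges on the negation-symmetry of the color set $\{-c,\ldots,c\}$. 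One small logical quibble there: the consistency of signs on non-zero blocks is not a consequence of $F$ being nonempty (every flat of this central arrangement contains the origin), but simply of the definition of the decomposition --- any pair forced to satisfy both $x_i=x_j$ and $x_i=-x_j$ lies, by that very fact, in the zero block, so the non-zero blocks are sign-consistent by construction; your parenthetical already contains this observation, so the slip is harmless. As for what each approach buys: deletion--contraction is elementary and is needed later in the paper anyway, while your route proves more in one stroke, since the identity $\chi_G(\lambda)=\sum_{F}\mu(\hat{0},F)\,\lambda^{\dim F}$ is exactly the fact the paper invokes without proof at the end of Section~\ref{sec:signed} to conclude that the coefficients $c_j(G)$ in \eqref{eqn:cjs} are non-negative (the M\"obius function of the intersection lattice alternates in sign with codimension). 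So your argument simultaneously establishes Theorem~\ref{thm:chrompoly} and the coefficient-sign statement that the paper asserts separately.
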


\begin{example}
For $G$ as in Example~\ref{ex:main}, we have
\[
\chi_G(\lambda)=(\lambda-1)(\lambda-2)(\lambda-1)=\lambda^3-4\lambda^2+5\lambda-2 \, , 
\]
where $\lambda$ is the number of colors in a set of colors containing the color 0.  Note that evaluating $\chi_G(\lambda)$ at $\lambda=2c+1$ yields the number of proper $c$-colorings of $G$.
\end{example}

The key to proving polynomiality of $\chi_G(2c+1)$ is the relation between signed graphs, contractions, and deletions, which we will need subsequently.
Contraction/deletion for signed graphs relies upon the idea of switching a signed graph at a vertex.

\begin{definition}\label{def:switch}
Let $G$ be a signed graph with sign map $\sigma$, and let $v$ be a vertex of $G$.
We say that the signed graph $G'$ is obtained by \emph{switching $G$ at $v$} if the vertex and edge sets for $G'$ are $V(G)$ and $E(G)$, while the sign map $\sigma'$ for $G'$ is given by
\[
\sigma'(\{i,j\}):=\left\{
\begin{array}{ll}
-\sigma(\{i,j\}) & \text{ if } v=i \text{ or } v=j \\
\sigma(\{i,j\}) & \text{ else } 
\end{array}
\right. \, .
\]
If $H$ is obtained from $G$ by a finite sequence of switches, we say that $G$ and $H$ are \emph{switching equivalent}.
\end{definition}

Thus, one switches from $G$ to $G'$ at $v$ by negating the sign on all edges in $G$ incident with $v$.
The following proposition, demonstrating the role played by switching, is simple to prove.

\begin{proposition}\label{prop:chromswitchequiv}
If $G$ and $H$ are switching equivalent, then 
\[
\chi_H(2c+1)=\chi_G(2c+1) \, .
\]
\end{proposition}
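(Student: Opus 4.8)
The plan is to reduce to the case of a single switch and then to exhibit an explicit color-negation bijection. Since switching equivalence is generated by single-vertex switches, it suffices to prove that if $G'$ is obtained from $G$ by switching at a single vertex $v$, then $\chi_{G'}(2c+1)=\chi_G(2c+1)$ for every nonnegative integer $c$; the general statement follows by induction on the length of a switching sequence connecting $G$ to $H$. Because both $\chi_G$ and $\chi_H$ are polynomials in $\lambda=2c+1$ by Theorem~\ref{thm:chrompoly}, agreement of the counting functions at all nonnegative integers $c$ forces equality of the polynomials.

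For the single-switch case, I would define a map on the set of $c$-colorings $\phi\colon[n]\to\{-c,\dots,c\}$ by negating the color assigned to $v$: set $\phi'(v)=-\phi(v)$ and $\phi'(u)=\phi(u)$ for $u\neq v$. Since the color set is symmetric under negation, this map is an involution, hence a bijection on the set of all $c$-colorings. The content of the argument is to check that $\phi$ is a proper coloring of $G$ if and only if $\phi'$ is a proper coloring of $G'$, which I would verify one constraint type at a time.

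The verification splits according to the kind of constraint. For an edge $\{i,j\}$ not incident to $v$, neither its sign (under switching) nor the colors of its endpoints (under the bijection) change, so the defining inequality for $(G,\phi)$ is literally the same as that for $(G',\phi')$. For a half-edge $\{i\}$, the constraint $\phi(i)\neq 0$ is unchanged when $i\neq v$, and when $i=v$ the requirement $\phi'(v)=-\phi(v)\neq 0$ is equivalent to $\phi(v)\neq 0$. The key case is an edge $\{v,j\}$ incident to $v$, where switching flips the sign. If $\{v,j\}$ is positive in $G$, the constraint $\phi(v)\neq\phi(j)$ becomes, for the now-negative edge $\{v,j\}$ in $G'$, the requirement $\phi'(v)\neq-\phi'(j)$, i.e.\ $-\phi(v)\neq-\phi(j)$, which is exactly $\phi(v)\neq\phi(j)$; the negative-to-positive case is entirely symmetric. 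Thus the sign reversal and the color negation cancel precisely on the edges at $v$.

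I do not expect a genuine obstacle here, since the proposition is essentially bookkeeping, but the one point requiring care is confirming that negating the color at $v$ compensates exactly for the sign reversal on every edge incident to $v$, while leaving the half-edge and unincident-edge constraints untouched. Once the involution is seen to restrict to a bijection between the proper $c$-colorings of $G$ and those of $G'$, equality of the two counts, and hence of the two chromatic polynomials, is immediate.
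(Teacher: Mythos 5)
Your proof is correct: the single-switch reduction plus the color-negation involution $\phi'(v)=-\phi(v)$ is exactly the standard argument, and your case check (edges away from $v$, half-edges, and the sign-flip/negation cancellation on edges at $v$) is complete and accurate. The paper itself offers no proof --- it simply declares the proposition ``simple to prove'' --- and your write-up is precisely the bookkeeping argument the authors had in mind, so there is nothing to reconcile between the two.
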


\begin{definition}\label{def:contdel}
Let $G$ be a signed graph with an edge $e=\{i,j\}\in E(G)$ with sign $\sigma(e)$.
The \emph{deletion} of $G$ by $e$, denoted $G\setminus e$, is the signed graph obtained by removing $e$ from $E(G)$.
A \emph{contraction} of $G$ by $e$, denoted $G/e$, is a signed graph in the same switching class as the graph obtained from $G$ by the following process (which is well-defined up to switching equivalence).
\begin{itemize}
\item If $\sigma(e)=1$, then delete $e$ from $E(G)$ and contract as in the ordinary graph case by identifying $i$ and $j$ in $V(G)$.
When $\{i,j\}$ is also present in $E(G)$ as a negative edge, add a half-edge at the vertex given by $i=j$ after contracting (if this half-edge is not already present).
\item If $\sigma(e)=-1$, then first switch at an endpoint of $e$ so that $\sigma(e)=1$ and proceed as in the positive edge contraction case.
\end{itemize}

Given a half-edge $i\in E(G)$, the \emph{deletion} of $G$ by $i$, denoted $G\setminus i$, is the signed graph obtained by removing $i$ from $E(G)$.
The \emph{contraction} of $G$ by $i$, denoted $G/i$, is the signed graph with vertex set $V(G)\setminus \{i\}$ and edge set $\{e\setminus \{i\}\mid e\in E(G)\}$.
\end{definition}

Note that $E(G)$ is a multiset, thus it is possible that two copies of $\{i,j\}$ are contained in $E(G)$ with different signs.
If this is the case, then only one copy of $\{j\}$ is retained in the edge set of the deletion and contraction.
The key property of contraction/deletion, and what makes it relevant for the proof of Theorem~\ref{thm:chrompoly}, is given next.

\begin{proposition}
Given a signed graph $G$ with positive edge $e$, 
\[
\chi_G(2c+1)=\chi_{G\setminus e}(2c+1) - \chi_{G/e}(2c+1) \, . 
\]
\end{proposition}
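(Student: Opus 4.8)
The plan is to use the standard deletion--contraction argument, counting proper $c$-colorings directly and partitioning them according to the colors assigned to the endpoints of $e$. Write $e = \{i,j\}$ with $\sigma(e) = 1$. Since $G$ and $G \setminus e$ differ only in the presence of the positive edge $e$, a $c$-coloring $\phi$ is proper for $G$ if and only if it is proper for $G \setminus e$ and additionally satisfies $\phi(i) \neq \phi(j)$. Thus the proper $c$-colorings of $G \setminus e$ split into two disjoint classes: those with $\phi(i) \neq \phi(j)$, which are exactly the proper $c$-colorings of $G$, and those with $\phi(i) = \phi(j)$. This yields
\[
\chi_{G\setminus e}(2c+1) = \chi_G(2c+1) + N,
\]
where $N$ is the number of proper $c$-colorings $\phi$ of $G\setminus e$ with $\phi(i) = \phi(j)$, and the desired identity follows once we show $N = \chi_{G/e}(2c+1)$.

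To identify $N$ with $\chi_{G/e}(2c+1)$, I would exhibit a bijection between the proper $c$-colorings of $G\setminus e$ satisfying $\phi(i) = \phi(j)$ and the proper $c$-colorings of $G/e$. Because $\sigma(e) = 1$, no switching is needed in the contraction, and by Proposition~\ref{prop:chromswitchequiv} we are free to compute $\chi_{G/e}(2c+1)$ using the representative obtained by deleting $e$ and identifying $i$ and $j$ to a single vertex $w$ (adding a half-edge at $w$ when a negative copy of $\{i,j\}$ is present in $G$). Given $\phi$ with $\phi(i) = \phi(j) = a$, define $\psi$ on $G/e$ by $\psi(w) = a$ and $\psi(k) = \phi(k)$ for every other vertex $k$; the inverse assigns the color of $w$ to both $i$ and $j$. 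This is visibly a bijection of $c$-colorings, so the only real content is checking that it matches the properness conditions on the two sides.

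The verification, which is where the bookkeeping lives, proceeds edge by edge. Every edge of $G\setminus e$ not incident to $i$ or $j$ is unchanged in $G/e$ and imposes the same condition on $\psi$ as on $\phi$. An edge $\{k,i\}$ or $\{k,j\}$ with $k \notin \{i,j\}$ becomes an edge $\{k,w\}$ of the same sign, and since $\phi(i) = \phi(j) = \psi(w)$ the corresponding inequality ($\psi(k) \neq \psi(w)$ for a positive edge, $\psi(k) \neq -\psi(w)$ for a negative edge) holds for $\psi$ if and only if it holds for $\phi$. The one case requiring care is a negative edge $\{i,j\}$ present alongside $e$: for $\phi$ it imposes $\phi(i) \neq -\phi(j)$, i.e. $a \neq -a$, which is equivalent to $a \neq 0$; this is exactly the condition $\psi(w) \neq 0$ imposed by the half-edge that the contraction rule attaches at $w$ in precisely this situation. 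Hence $\phi$ is proper for $G\setminus e$ if and only if $\psi$ is proper for $G/e$, giving $N = \chi_{G/e}(2c+1)$ and completing the argument. The main obstacle is simply ensuring this last case (and any multiset coincidences of edges) is handled consistently with Definition~\ref{def:contdel}; everything else is routine.
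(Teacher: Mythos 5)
Your proof is correct. Note that the paper itself offers no proof of this proposition---it is stated as a known fact in the background section drawn from Zaslavsky's papers \cite{zas1, zas2}---so there is no argument in the paper to compare against. Your deletion--contraction argument is the standard one and fills in exactly the details that matter in the signed setting: the bijection between proper colorings of $G\setminus e$ with $\phi(i)=\phi(j)$ and proper colorings of $G/e$, the observation that a parallel negative copy of $\{i,j\}$ imposes $\phi(i)\neq -\phi(i)$, i.e. $\phi(i)\neq 0$, which is precisely the condition encoded by the half-edge that Definition~\ref{def:contdel} attaches at the contracted vertex, and the appeal to Proposition~\ref{prop:chromswitchequiv} to justify computing $\chi_{G/e}$ from a specific representative of its switching class. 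The only points you wave at rather than spell out---half-edges at $i$ or $j$ becoming half-edges at the contracted vertex, and parallel edges of the same sign collapsing to a single edge imposing an equivalent condition---are genuinely routine and are handled by the same equality $\phi(i)=\phi(j)=\psi(w)$ driving the rest of the verification.
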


A final fact we need is that when the chromatic polynomial is expressed as
\begin{equation}\label{eqn:cjs}
\chi_G(\lambda)=\sum_{j=1}^{n-1}(-1)^{n-j}c_j(G)\lambda^j + \lambda^n \, ,
\end{equation}
the $c_j$'s are non-negative integers.
This can be seen in several ways, e.g. by recognizing $\chi_G(\lambda)$ as the characteristic polynomial of the arrangement $\mathcal B_G$ defined in the next section.


\section{Signed graphic arrangements, coloring complexes, and group actions}\label{sec:complex}

Our construction of signed graph coloring complexes involves the following hyperplane arrangement.

\begin{definition}
The \emph{type B braid arrangement} is the collection of hyperplanes
\[
\mathcal B_n:= \{H_{ij}^{+1} \mid 1 \leq i < j \leq n \} \cup \{H_{ij}^{-1} \mid 1 \leq i < j \leq n \} \cup \{H_i \mid 1 \leq i \leq n \}
\]
where $H_{ij}^{+1} = \{ (x_1, \hdots, x_n) \in \R^n \mid x_i = x_j \}$, $H_{ij}^{-1} = \{ (x_1, \hdots, x_n) \in \R^n \mid x_i = -x_j \}$, and $H_i = \{ (x_1, \hdots, x_n) \mid x_i = 0 \}$.  
\end{definition}

The arrangement $\mathcal B_n$ induces a regular cell decomposition $\Delta_{\mathcal B_n}$ of the sphere $S^{n-1}$, which we describe using the choice of $\partial[-1,1]^n$ as our preferred representation of $S^{n-1}$.
$\mathcal B_n$ induces a triangulation of $\partial[-1,1]^n$ where each vertex $v = (v_1, \hdots, v_n)$ of the resulting triangulation can be identified with a nonempty subset of $[n] \cup -[n]$ by the rule that for each $v_i$, $\pm i$ is included in the subset if $v_i = \pm 1$, respectively, and $i$ is not included in the subset if $v_i = 0$.
The faces of the triangulation are given by collections of vertices corresponding to chains (with respect to inclusion) of subsets of this type.
Alternatively, given such a chain 
\[
C:= Q_1 \subset Q_2 \subset Q_3\subset \cdots \subset Q_r \, ,
\]
we associate to $C$ the ordered set partition of $[n]\cup -[n]$ given by
\[
Q_1 \, | \, Q_2\setminus Q_1 \, | \, Q_3\setminus Q_2 \, |\cdots| \, Q_r\setminus Q_{r-1} \, | \, [n]\cup[-n] \setminus Q_r \, .
\]
It is clear that $C$ may be fully recovered from its associated partition.

\begin{example}
The triangulation of $\partial[-1,1]^3$ induced by $\mathcal B_3$ is shown in Figure~\ref{fig:B3}.
The chamber marked G has vertices $(0,1,0)$, $(0,1,-1)$, and $(-1,1,-1)$; thus, G is identified with the chain
\[
\{2\}\subset \{2,-3\} \subset \{-1,2,-3\} \, 
\]
with associated partition \((2|-3|-1|1,-2,3)\).
Similarly, the chamber marked B has vertices $(1,0,0)$, $(1,-1,0)$, and $(1,-1,1)$, hence corresponds to the chain
\[
\{1\}\subset \{1,-2\} \subset \{1,-2,3\} \, 
\]
with associated partition \((1|-2|3|-1,2,-3) \).
\end{example}

\begin{figure}[ht]
\begin{center}
\includegraphics{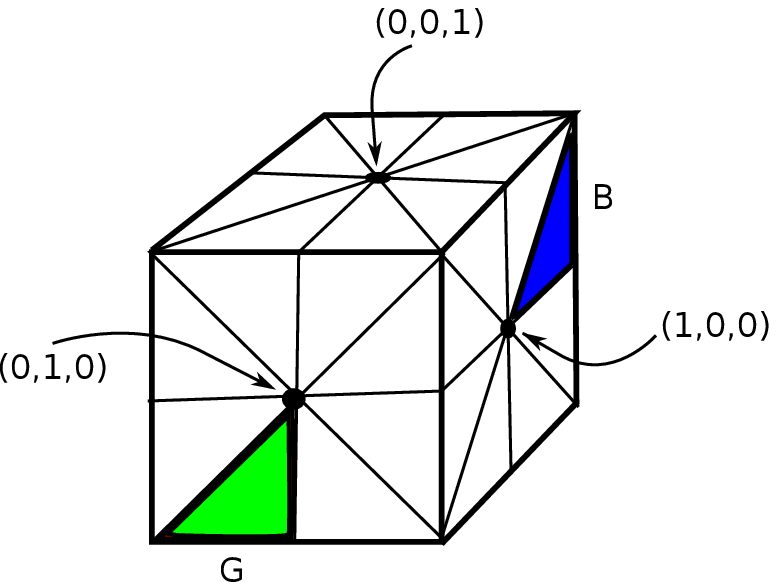}
\caption{}
\label{fig:B3}
\end{center}
\end{figure}

\begin{definition}Given a signed graph $G$ with sign map $\sigma$, the \emph{(signed) graphic arrangement} corresponding to $G$ is the subarrangement $\mathcal B_{G}$ of $\mathcal B_n$ defined by
\[
\mathcal B_{G} = \{ H_{ij}^{\sigma(\{i,j\})} \mid \{i,j\} \in E(G) \} \cup \{H_i : \{i\}\in E(G)\} \, .
\]
\end{definition}

\begin{example}\label{ex:grapharr}
Continuing with the signed graph of Example~\ref{ex:main}, we see that
\[
\mathcal B_{G}=\{(x_1,x_2,x_3)\in \R^3\mid \text{ either }x_1=0, \phantom{.} x_1=x_2, \phantom{.} x_1=-x_2 \text{ or } x_2=-x_3 \} \, .
\]
\end{example}

Before discussing the geometric manifestation of coloring complexes using $\mathcal{B}_G$, we will first define the coloring complex of a signed graph in a purely combinatorial manner, using the viewpoint of ordered set partitions developed above.
For $G$ a signed graph on $[n]$, we say a subset $A\subset [n]\cup -[n]$ \emph{contains an edge} of $G$ if one of the following two cases hold for some pair $\{a,b\}\subset A$.
\begin{itemize}
\item $\{a,b\}$ is a positive edge in $G$, or
\item $a\in [n]$ and $b\in -[n]$, and $\{a,-b\}$ is a negative edge in $G$.
\end{itemize}

\begin{definition}\label{def:colcomplex}
Given a signed graph $G$, the coloring complex $\Delta_G$ is the simplicial complex whose facets are ordered set partitions $P_1|P_2|\cdots |P_{n}$ of $[n]\cup -[n]$ such that 
\begin{enumerate}
\item $P_{n}=\left([n]\cup -[n]\right) \setminus \cup_{j=1}^{n-1} P_j$,
\item for each pair $\{j,-j\}$ with $j\in [n]$, either $j$ or $-j$ is contained in $P_n$, and
\item either there exists a unique non-singleton block $P_i$ with $1\leq i\leq n-1$ that contains an edge of $G$, or
\item for some half-edge $i\in E(G)$, $\{i,-i\}\subset P_n$.
\end{enumerate}  
\end{definition}

The faces of $\Delta_G$ are formed by merging adjacent blocks in the partitions defining the facets.
Thus, the vertices of $\Delta_G$ are given by partitions $P_1|\left([n]\cup -[n]\right) \setminus P_1$ where $P_1$ is obtained by merging an initial segment of blocks in one of the facets of $\Delta_G$ described above.
The $r$-dimensional faces of $\Delta_G$ are the ordered set partitions with $r+2$ blocks.
Note that the role of the empty set is taken by the trivial partition $[n]\cup -[n]$.
As in the case for $\Delta_{\mathcal B_n}$, each partition $P_1|P_2|\cdots |P_n$ corresponds uniquely to a chain in $2^{[n]\cup -[n]}$ of the form
\[
\emptyset \subset P_1 \subset P_1\cup P_2 \subset \cdots \subset \cup_{i=1}^jP_i \subset \cdots \subset [n]\cup -[n] \, .
\]

Geometrically, the coloring complex arises as $\Delta_G=\mathcal{B}_G\cap \partial[-1,1]^n$.
The space $\Delta_G$ inherits the simplicial triangulation described above via the restriction of $\Delta_{\mathcal B_n}$ to $\Delta_G$.
The connection to the triangulation of $\partial[-1,1]^n$ induced by $\mathcal B_n$ is immediate from our previous discussion.
We will freely use the notation $\Delta_G$ to denote both the topological space $\mathcal{B}_G\cap \partial[-1,1]^n$ and the abstract simplicial complex obtained after intersecting with $\Delta_{\mathcal B_n}$.
Given this geometric observation, it follows that $\Delta_G$ is an example of a link complex of a subspace arrangement, resulting in the following theorem.

\begin{theorem}{\rm (Hultman, \cite[Theorem 4.2]{hu})}
For any signed graph $G$, $\Delta_G$ is shellable, hence is homotopy equivalent to a wedge of spheres of dimension $n-2$.
\end{theorem}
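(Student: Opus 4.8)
The plan is to realize $\Delta_G$ as a link complex of a subspace arrangement sitting inside the type B Coxeter arrangement, invoke Hultman's general shellability theorem, and then read off the homotopy type from purity. Since the excerpt has already established the geometric identity $\Delta_G = \mathcal{B}_G \cap \partial[-1,1]^n$ together with the combinatorial dictionary between simplices of the ambient triangulation $\Delta_{\mathcal{B}_n}$ and ordered set partitions of $[n] \cup -[n]$, the bulk of the work is verifying that these two descriptions of $\Delta_G$ genuinely coincide; once that is in hand, shellability is immediate from \cite[Theorem 4.2]{hu}.

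First I would record that $\mathcal{B}_n$ is the reflection arrangement of the Coxeter group of type $B_n$, so its intersection with $\partial[-1,1]^n$ is the type $B_n$ Coxeter complex, whose simplices are exactly the chains $Q_1 \subset \cdots \subset Q_r$ of Section~\ref{sec:complex}. Because $E(G)$ selects a subcollection of the hyperplanes of $\mathcal{B}_n$, the subarrangement $\mathcal{B}_G$ is a Coxeter subarrangement in the sense of \cite{hu}, and $\mathcal{B}_G \cap \partial[-1,1]^n$ is by definition its link complex. The verification I would carry out is that the induced subcomplex of $\Delta_{\mathcal{B}_n}$ has facets exactly the ordered set partitions of Definition~\ref{def:colcomplex}: a chain meets $\mathcal{B}_G$ precisely when one of its steps lies on some $H_{ij}^{\sigma(\{i,j\})}$ or some $H_i$ belonging to $\mathcal{B}_G$, which translates into the presence of a non-singleton block containing an edge (condition (3)) or a half-edge forcing $\{i,-i\}\subset P_n$ (condition (4)).

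Granting the identification, \cite[Theorem 4.2]{hu} yields shellability of $\Delta_G$ directly. For the homotopy type I would note that every facet in Definition~\ref{def:colcomplex} is an ordered set partition with exactly $n$ blocks: in the edge case the first $n-1$ blocks comprise one size-two block and $n-2$ singletons, while in the half-edge case they are all singletons and $P_n$ absorbs both $i$ and $-i$, and in either case the block count is $n$. Hence every facet has dimension $n-2$ and $\Delta_G$ is pure. Applying the standard fact that a pure shellable complex of dimension $d$ is homotopy equivalent to a wedge of $d$-spheres then gives $\Delta_G \simeq \bigvee S^{n-2}$.

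The main obstacle is the identification step, not the shellability, which we are black-boxing through Hultman. I expect the delicate points to be the bookkeeping around half-edges --- matching the coordinate hyperplanes $H_i$ with condition (4) --- and around oppositely signed multiedges, for which, as noted after Definition~\ref{def:contdel}, only one copy of the relevant vertex is retained; one must check these combinatorial conventions are consistent with the geometric intersection $\mathcal{B}_G \cap \partial[-1,1]^n$. If one preferred a self-contained argument avoiding \cite{hu}, the alternative would be to exhibit an explicit lexicographic shelling order on the facet set of ordered set partitions, mirroring the known type A constructions, but reducing to Hultman's theorem is both shorter and entirely sufficient here.
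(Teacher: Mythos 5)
Your proposal takes essentially the same route as the paper: the paper also observes that $\Delta_G = \mathcal{B}_G \cap \partial[-1,1]^n$ realizes the coloring complex as a link complex of a subarrangement of the type B Coxeter arrangement, and then cites Hultman's Theorem 4.2 as a black box, exactly as you do. The additional details you supply --- matching the facets of the induced subcomplex with Definition~\ref{def:colcomplex} and deducing the wedge-of-spheres homotopy type from purity in dimension $n-2$ plus shellability --- are correct and simply make explicit what the paper leaves implicit.
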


One reason the complex $\Delta_G$ is important is that it provides a path through which we can interpret chromatic polynomials as Hilbert polynomials of graded algebras.

\begin{theorem}{\rm (Hultman, \cite[Corollary 5.8]{hu})}
Let $k[\cone(\Delta_G)]$ be the Stanley-Reisner ring of the cone over $\Delta_G$.
The Hilbert polynomial of $k[\cone(\Delta_G)]$ is given by 
\[
F(k[\cone(\Delta_G)];c)=(2c+1)^n-\chi_G(2c+1) \, .
\]
\end{theorem}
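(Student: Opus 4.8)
The plan is to identify both sides of the asserted identity with the number of \emph{improper} $c$-colorings of $G$ and then match them through a single bijection. First I would extract the Hilbert polynomial combinatorially. Since $\cone(\Delta_G)$ is obtained by adjoining a vertex lying in every face, its Stanley-Reisner ring satisfies $k[\cone(\Delta_G)]\cong k[\Delta_G][y]$, with $y$ the cone-point variable a non-zero-divisor, so its Hilbert series is $(1-t)^{-1}$ times that of $k[\Delta_G]$. Writing the Hilbert series of $k[\Delta_G]$ as $\sum_i f_{i-1}\,t^i(1-t)^{-i}$, where $f_{i-1}$ counts the $(i-1)$-faces, and extracting the coefficient of $t^c$ from $\sum_i f_{i-1}\,t^i(1-t)^{-i-1}$ yields
\[
F(k[\cone(\Delta_G)];c)=\sum_{F\in\Delta_G}\binom{c}{|F|},
\]
the sum running over all faces $F$ (including the empty face). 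This reduces the theorem to a purely enumerative identity.

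For the right-hand side, $(2c+1)^n$ counts all $c$-colorings $\phi:[n]\to\{-c,\dots,c\}$, while by definition $\chi_G(2c+1)$ counts the proper ones; hence $(2c+1)^n-\chi_G(2c+1)$ is the number of improper $c$-colorings. The heart of the argument is a bijection between improper colorings and pairs $(F,S)$, where $F$ is a face of $\Delta_G$ and $S\subseteq\{1,\dots,c\}$ with $|S|=|F|$. Given $\phi$, I would extend it antisymmetrically to $\hat\phi$ on $[n]\cup -[n]$ by $\hat\phi(-i)=-\phi(i)$, and for each threshold $t\ge 1$ form the superlevel set $Q_t=\{a:\hat\phi(a)\ge t\}$. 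Antisymmetry forces each $Q_t$ to contain at most one of each pair $\{i,-i\}$, so the distinct nonempty $Q_t$ form a chain of signed sets whose associated ordered set partition has last block $\{a:\hat\phi(a)\le 0\}$, automatically satisfying conditions (1)--(2) of Definition~\ref{def:colcomplex}. The set $S$ of distinct positive values attained by $\hat\phi$ records the thresholds, and $|S|$ equals the number of vertices of the chain, namely $|F|$. Conversely $(F,S)$ reconstructs $\hat\phi$, hence $\phi$, by assigning the $k$-th largest value of $S$ to the $k$-th block and using antisymmetry on the last block, so the map is a genuine bijection once we check it lands in $\Delta_G$.

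The step I expect to be the main obstacle is verifying that $\phi$ is improper precisely when its chain is a face of $\Delta_G$. Here I would lean on the geometric description $\Delta_G=\mathcal B_G\cap\partial[-1,1]^n$: a cell of $\Delta_{\mathcal B_n}$ lies in $\Delta_G$ iff its closed cell is contained in some hyperplane of $\mathcal B_G$, which (since a relatively open cell contained in a finite union of hyperplanes must lie inside one of them) happens iff its partition has two elements forced equal or opposite by an edge, or a half-edge vertex sitting in the zero block. Translating the three ways $\phi$ can fail to be proper---$\phi(i)=\phi(j)$ on a positive edge, $\phi(i)=-\phi(j)$ on a negative edge, and $\phi(i)=0$ on a half-edge---into ``two elements share a block'' or ``$\{i,-i\}$ lies in the last block'' shows that improperness matches conditions (3)--(4) exactly. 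Assembling the pieces, the number of improper $c$-colorings is $\sum_{F\in\Delta_G}\binom{c}{|F|}$, which by the first paragraph equals $F(k[\cone(\Delta_G)];c)$; I would finish by noting that the only care needed is the degenerate case of a graph with no edges or half-edges, where $\Delta_G$ is void and both sides vanish.
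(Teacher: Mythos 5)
Your proposal is correct, but there is nothing in the paper to compare it against: the statement is quoted verbatim from Hultman \cite[Corollary 5.8]{hu}, and the paper supplies no proof of its own. Your argument is a legitimate self-contained derivation in the spirit of Steingr\'{\i}msson's and Hultman's original ones. The reduction $F(k[\cone(\Delta_G)];c)=\sum_{F\in\Delta_G}\binom{c}{|F|}$ is the standard Stanley--Reisner computation (coning multiplies the Hilbert series by $(1-t)^{-1}$, and summing $\binom{m-1}{i-1}$ over $m\le c$ gives $\binom{c}{i}$), and your superlevel-set construction is exactly the type B analogue of the classical bijection between improper colorings and pairs (face, set of positive levels): antisymmetry of $\hat\phi$ forces each superlevel set to contain at most one of $\{i,-i\}$, the distinct positive values attained index the chain, and the reconstruction from $(F,S)$ inverts the map. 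Two points deserve emphasis. First, you are right to certify membership in $\Delta_G$ via the geometric description rather than via conditions (3)--(4) of Definition~\ref{def:colcomplex} read literally: ``contains an edge'' as stated is asymmetric, and does not on its face cover a block $\{-i,-j\}$ with $\{i,j\}$ a positive edge, nor the coloring with $\phi(i)=\phi(j)=0$ where all of $i,-i,j,-j$ lie in the last block; the robust equivalence is that the closed cell lies in some hyperplane of $\mathcal{B}_G$, which your observation about a relatively open convex cell inside a finite union of hyperplanes reduces to lying in a single hyperplane, and this matches the three failure modes of properness including the zero-value cases. Second, the edgeless degenerate case is convention-dependent but harmless, and consistent with the paper, whose main theorem assumes at least one edge or half-edge. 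As a sanity check, on the paper's running example one has $f_0=22$, $f_1=32$, and indeed $1+22c+32\binom{c}{2}=16c^2+6c+1=(2c+1)^3-\chi_G(2c+1)$.
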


\begin{example}
Let $G$ be as in Example~\ref{ex:main}, hence $\mathcal B_G$ as in Example~\ref{ex:grapharr}.
The complex $\Delta_G$, arising as a subcomplex of the triangulation of $\partial[-1,1]^3$ shown in Figure~\ref{fig:B3}, is given in Figure~\ref{fig:colcomplex}.
It is straightforward to verify that $\displaystyle \Delta_G\simeq \vee_{i=1}^{11}S^1$.
\end{example}

\begin{figure}[ht]
\begin{center}
\includegraphics{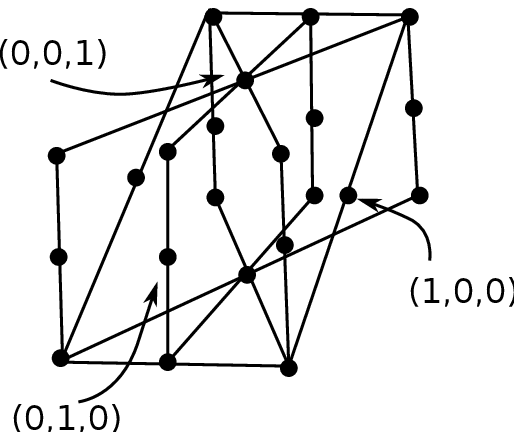}
\caption{}
\label{fig:colcomplex}
\end{center}
\end{figure}

We will later need to undertake a detailed analysis of the boundary operators for $\Delta_G$, for which the following notation is needed.
Let $d_i$ denote the map on the $r$-faces of $\Delta_G$ defined by:
\[
d_i(P_1\mid \cdots \mid P_i \mid P_{i+1} \mid \cdots \mid P_{r+2}) = (P_1 \mid \cdots \mid P_{i-1} \mid P_{i} \cup P_{i+1} \mid P_{i+2} \mid \cdots \mid P_{r+2}).
\]
The boundary operator on the $r$-chains of $\Delta_G$ is then defined by
\[
\partial_r = \sum_{i=1}^{r+1} (-1)^{i-1} d_i .
\]
Note that since $\Delta_G$ is homotopic to a wedge of spheres of dimension $n-2$, the reduced homology of $\Delta_G$ is non-vanishing only in dimension $n-2$.

Another reason that signed graph coloring complexes are interesting is that their chain spaces admit a family of actions by hyperoctahedral groups, which we will now introduce.
All relevant background regarding hyperoctahedral groups can be found in \cite[Section 8.1]{bb}.
Let $B_n$ denote the $n$-th hyperoctahedral group, i.e., $B_n$ is the set of all permutations of $[-n,n]$ such that $\pi(-i)=-\pi(i)$ for all $i\in \{0,1,\ldots,n\}$.
Note that $\pi(0):=0$ for all $\pi\in B_n$.
For an element $\pi\in B_n$, we say the \emph{window} for $\pi$ is 
\[
\pi=[\pi(1) \, \pi(2) \, \cdots \, \pi(n)] \, ;
\]
thus, the window for $\pi$ is analogous to one-line notation for the symmetric group.
Recall that for an element $\pi\in B_n$, the \emph{sign} of $\pi$ is
\[
\sgn(\pi):=(-1)^{\ell(\pi)}
\]
where $\ell(\pi)$ is the length of $\pi$.
Recall also that the \emph{descent statistic} for $\pi\in B_n$ is
\[
\des(\pi):=\#\{i\in [0,\ldots,n-1]\mid \pi(i)>\pi(i+1)\} \, .
\]

Set $P_{-i}:=-P_i$.
For any signed graph $G$, an element $\pi\in B_{r+1}$ acts on $C_r(\Delta_G;\C)$, the space of $r$-chains of the coloring complex for $G$, by extending linearly the action on basis elements given by
\[
\pi (P_1 \mid P_2 \mid \cdots \mid P_{r+1} \mid P_{r+2}) = (P_{\pi^{-1}(1)} \mid \cdots \mid P_{\pi^{-1}(r+1) } \mid P_{r+2}') \, ,
\]
where $P_{r+2}'$ is $P_{r+2}$ with $j\in P_{r+2}$ changed to $-j\in P_{r+2}'$ if $\pi$ changed the sign on the block containing $j$.
Informally, $\pi$ acts by permuting the first $r+1$ blocks in the partitions defining the $r$-faces of $\Delta_G$ by $\pi^{-1}$, changing the signs of all elements in blocks where a sign change occurs on the index, and subsequently modifying $P_{r+2}$ to account for those sign changes.

\begin{example}
Let $\pi=[2 \, -1 \, -3]\in B_3$, hence $\pi^{-1}=[-2 \phantom{..} 1 \, -3]$.
For 
\[
\tau:=(1,3 |-2,5| 6 | -1,2,-3,4,-4,-5,-6)\in C_2(\Delta_{B_6}) \, ,
\]
we have that
\[
\pi(\tau)=(2,-5|1,3|-6|-1,-2,-3,4,-4,5,6) \, .
\]
\end{example}


\section{Eulerian idempotents and type B {H}odge decompositions}\label{sec:eulerian}

Our goal in this section is to prove Theorem~\ref{thm:hodge}, stating that $H_{n-2}(\Delta_G)$ admits a particular type of direct sum decomposition called a \emph{type B Hodge decomposition}.
This decomposition arises from the actions of $B_{r+1}$ on $C_r(\Delta_G)$ defined in the previous section, through the study of the following family of idempotents defined by F. Bergeron and N. Bergeron \cite{bn}. 

\begin{definition}
The \emph{Eulerian idempotents} of type B are elements $\tilde{\rho}_n^{(j)}\in \C[B_n]$ defined by 
\begin{equation}\label{eqn:idemdef}
\tilde{\rho}_n(x)=\sum_{j=0}^nx^j\tilde{\rho}_n^{(j)} := \sum_{\pi \in B_n}\left[\frac{\prod_{k=1}^n(x-2\des(\pi)+2k-1)}{2^nn!} \right] \sgn (\pi)\pi \, .
\end{equation}
\end{definition}

Bergeron and Bergeron \cite[Section 4]{bn} show that
\begin{equation}\label{eqn:idempotent}
\tilde{\rho}_n^{(r)}\tilde{\rho}_n^{(s)}=\delta_{r,s}\tilde{\rho}_n^{(r)} \, .
\end{equation}
Thus, these elements form a family of orthogonal idempotents in $\C[B_n]$.
Evaluating \eqref{eqn:idemdef} at $x=1$ yields 
\begin{equation}\label{eqn:sumtoid}
\sum_{j=0}^n\tilde{\rho}_n^{(j)}=\id
\end{equation}
after observing that
\[
\frac{\prod_{k=1}^n(1-2\des(\pi)+2k-1)}{2^nn!}=\binom{-\des(\pi)+n}{n}=
\left\{
\begin{array}{cl}
1 & \text{ if } \des(\pi)=0 \\
0 & \text{ else }\\
\end{array}
\right. \, .
\]

\begin{theorem}\label{thm:hodge}
Let $C_r^{(j)}(\Delta_G) = \tilde{\rho}_r^{(j)}C_r(\Delta_G)$.  
The chain complex for $\Delta_G$ decomposes as
\[
(C_{*}(\Delta_G), \partial_{*}) = \bigoplus_{j \geq 0} (C_{*}^{(j)}(\Delta_G), \partial_{*}) \, ,
\]
thus
\[
H_r(\Delta_G) = \bigoplus_{j \geq 0} H_{r}^{(j)}(\Delta_G).
\]
\end{theorem}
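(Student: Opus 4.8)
The plan is to show that the type B Eulerian idempotents $\tilde{\rho}_{r+1}^{(j)}$ commute with the boundary maps $\partial_r$, so that the orthogonal idempotent decomposition of each chain space $C_r(\Delta_G)$ assembles into a direct sum decomposition of the chain \emph{complex}. The homology decomposition then follows formally, since homology of a direct sum of subcomplexes is the direct sum of their homologies.

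First I would record the two facts we get for free. By the orthogonality relation \eqref{eqn:idempotent} and the resolution of the identity \eqref{eqn:sumtoid}, for each fixed $r$ the operators $\tilde{\rho}_{r+1}^{(j)}$ (acting on $C_r(\Delta_G)$ via the $B_{r+1}$-action defined in Section~\ref{sec:complex}) are orthogonal idempotents summing to the identity. Hence $C_r(\Delta_G) = \bigoplus_{j\geq 0} C_r^{(j)}(\Delta_G)$ as a vector space, with $C_r^{(j)}(\Delta_G) = \tilde{\rho}_{r+1}^{(j)}C_r(\Delta_G)$, and this holds chain-degree by chain-degree. (One should take care that the index on the idempotent is $r+1$, matching the group $B_{r+1}$ acting on $r$-chains; I would state this normalization explicitly at the outset to avoid confusion.)

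The heart of the argument, and what I expect to be the main obstacle, is the commutation relation $\partial_r \,\tilde{\rho}_{r+1}^{(j)} = \tilde{\rho}_r^{(j)}\,\partial_r$ for all $j$. Because each $\tilde{\rho}^{(j)}$ is a polynomial-coefficient combination of the group elements weighted by descents and signs, it suffices to understand how $\partial_r$ interacts with the $B$-action at the level of the face maps $d_i$. The key computation is a ``straightening'' identity expressing $d_i \circ \pi$ for $\pi \in B_{r+1}$ as $\pi' \circ d_{i'}$ for an appropriate $\pi' \in B_r$ and index $i'$ determined by how $\pi$ places the blocks being merged; this is the signed analogue of the standard fact that the simplicial boundary on ordered set partitions is equivariant in a degree-shifted sense. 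I would then verify that the descent statistic and sign transform compatibly under this reindexing: merging two adjacent blocks changes $\des$ in a controlled way, and $\sgn$ picks up exactly the factors recorded by the $(-1)^{i-1}$ in the definition of $\partial_r$. Assembling these pieces, the polynomial weight $\prod_{k}(x - 2\des(\pi) + 2k - 1)$ is preserved coefficient-by-coefficient in $x$, which forces the per-degree component $\tilde{\rho}^{(j)}$ to commute with $\partial$. This is precisely the type B adaptation of Hanlon's argument for the symmetric group, and the signed bookkeeping (tracking sign changes on blocks and their effect on the last block $P_{r+2}$) is where the real work lies.

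Once commutation is established, the conclusion is immediate. The subspaces $C_*^{(j)}(\Delta_G)$ are then subcomplexes, $\partial$ respects the direct sum $C_*(\Delta_G) = \bigoplus_{j\geq 0} C_*^{(j)}(\Delta_G)$, and passing to homology gives $H_r(\Delta_G) = \bigoplus_{j\geq 0} H_r^{(j)}(\Delta_G)$ where $H_r^{(j)}(\Delta_G)$ is the homology of the $j$-th subcomplex. Since $\Delta_G$ is a wedge of spheres in dimension $n-2$, only $H_{n-2}$ is nonvanishing, so this simultaneously produces the promised type B Hodge decomposition of the top homology.
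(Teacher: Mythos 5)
Your proposal is correct and takes essentially the same approach as the paper: Theorem~\ref{thm:hodge} is deduced there from orthogonality \eqref{eqn:idempotent}, the resolution of the identity \eqref{eqn:sumtoid}, and the intertwining relation $\partial_r\tilde{\rho}_{r+1}^{(j)}=\tilde{\rho}_{r}^{(j)}\partial_r$ (the paper's Lemma~\ref{lem:intertwine}), whose proof is exactly the $d_i\pi=\sigma d_s$ straightening analysis you sketch, and you correctly flag the $B_{r+1}$-versus-$r$ indexing subtlety. One refinement to your sketch of the hard step: the straightening does not preserve descents uniformly, so the paper first proves the shifted relation $\partial_r\tilde{l}_{r+1}^{(j)}=\bigl(\tilde{l}_{r}^{(j)}+\tilde{l}_{r}^{(j-1)}\bigr)\partial_r$ and then recovers the clean commutation for the $\tilde{\rho}^{(j)}$'s via the $\tilde{\lambda}^{(j)}$'s and a Vandermonde change of basis, rather than by coefficient-by-coefficient preservation of the polynomial weight.
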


\begin{proof}
This is a straightforward consequence of \eqref{eqn:idempotent} and \eqref{eqn:sumtoid} along with the relation $\partial_r\tilde{\rho}_{r+1}^{(j)}=\tilde{\rho}_{r}^{(j)}\partial_r$ found in Lemma~\ref{lem:intertwine}.
\end{proof}

To prove Lemma~\ref{lem:intertwine}, we first define
\[
\tilde{l}_n^{(j)}:= (-1)^{j-1}\sum_{\substack{\pi\in B_n \\ \des(\pi)=j}}\sgn(\pi)\pi \phantom{......}\text{and}\phantom{......} \tilde{\lambda}_{n}^{(j)}=\sum_{i=0}^j(-1)^i\binom{n+i}{i}\tilde{l}_{n}^{(j-i)} \, .
\]
It is straightforward to verify that
\begin{equation}\label{eqn:lambdarho}
\tilde{\lambda}_n^{(j)}=(-1)^{j-1}\tilde{\rho}_n(2j+1)
\end{equation}
by evaluating \eqref{eqn:idemdef} and comparing it to these definitions.
To prove our structural result regarding the $\tilde{\rho}_n^{(j)}$'s, it is easier to work directly with the elements $\tilde{l}_n^{(j)}$ and $\tilde{\lambda}_n^{(j)}$, and then apply~\eqref{eqn:lambdarho}.

\begin{lemma}\label{lem:lintertwine}
Let $G$ be a signed graph with vertex set $[n]$, with coloring complex $\Delta_G$ having chain complex $(C_*(\Delta_G), \partial_*)$.
For each $r$ such that $0 \leq r \leq n-2$ and for each $j$ such that $1 \leq j \leq r + 1$,
\[
\partial_r\tilde{l}_{r+1}^{(j)} = \left(\tilde{l}_{r}^{(j)} + \tilde{l}_{r}^{(j-1)}\right)\partial_r \, .
\]
\end{lemma}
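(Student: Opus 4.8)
The plan is to expand the left-hand side over the basis of $\C[B_{r+1}]$ together with the elementary merge maps, and then to show that after one cancellation the surviving terms reassemble into the right-hand side. Writing $\partial_r=\sum_{i=1}^{r+1}(-1)^{i-1}d_i$, we have
\[
\partial_r \tilde{l}_{r+1}^{(j)} = (-1)^{j-1}\sum_{i=1}^{r+1}(-1)^{i-1}\sum_{\substack{\pi\in B_{r+1}\\ \des(\pi)=j}}\sgn(\pi)\, d_i\pi \, .
\]
For each pair $(\pi,i)$ I would call $d_i\pi$ \emph{reducible} if $d_i\pi=\sigma d_k$ for some $\sigma\in B_r$ and some $k$, i.e. if permuting by $\pi$ and then merging positions $i,i+1$ agrees with first merging two adjacent original blocks $P_k,P_{k+1}$ and then permuting by an element of $B_r$. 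Since $d_k$ merges an unsigned adjacent pair, this occurs exactly when the blocks landing in positions $i,i+1$ are $P_{\pi^{-1}(i)},P_{\pi^{-1}(i+1)}$ with $|\pi^{-1}(i)|,|\pi^{-1}(i+1)|$ adjacent and $\pi^{-1}(i),\pi^{-1}(i+1)$ of the same sign; all remaining terms I call \emph{bad}. The lemma hinges on showing the bad terms cancel while the reducible terms produce precisely $(\tilde{l}_r^{(j)}+\tilde{l}_r^{(j-1)})\partial_r$.

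For the cancellation I would use the involution $\pi\mapsto s_i\pi$, where $s_i$ interchanges the values $i,i+1$ (and $-i,-(i+1)$). Because merging is a union and the sign of a block is carried by the block, $d_i(s_i\pi)=d_i\pi$ as chains, while $\sgn(s_i\pi)=-\sgn(\pi)$, and the map is clearly fixed-point-free. The key combinatorial observation is a dichotomy: left multiplication by $s_i$ alters exactly the two window entries in positions $|\pi^{-1}(i)|$ and $|\pi^{-1}(i+1)|$ (one of $\pm i$ and one of $\pm(i+1)$; the other two are ``hidden'' at negative positions and do not appear in the window used to compute $\des$). A short check on these two entries shows that $s_i$ \emph{changes} $\des(\pi)$ exactly when the term is reducible (the entries are adjacent and of equal sign, so swapping them creates or destroys one descent) and \emph{preserves} $\des(\pi)$ exactly when the term is bad (the entries are non-adjacent, or adjacent but of opposite sign, so no descent is affected). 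Hence $\pi\mapsto s_i\pi$ restricts, within the fixed descent class $\des=j$, to a fixed-point-free sign-reversing involution on the bad terms, and all of them cancel.

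It then remains to reorganize the reducible terms. For reducible $(\pi,i)$ I would write $d_i\pi=\sigma d_k$ with $\sigma\in B_r$ the contraction of $\pi$ merging the two adjacent positions carrying the consecutive values $i,i+1$; contracting an ascending consecutive pair leaves $\des(\sigma)=\des(\pi)=j$, whereas contracting a descending pair gives $\des(\sigma)=\des(\pi)-1=j-1$. Running this in reverse, each interior pair $(\sigma,k)$ with $k\le r$ admits exactly one ascending un-contraction (with $\des=\des(\sigma)$) and one descending un-contraction (with $\des=\des(\sigma)+1$), so the coefficient of $\sigma d_k$ is governed by whether $\des(\sigma)=j$ or $\des(\sigma)=j-1$, which is exactly the pattern of $\tilde{l}_r^{(j)}+\tilde{l}_r^{(j-1)}$ applied after $\partial_r$. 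What is left is the sign bookkeeping: one must verify that the length relation between $\pi$ and $\sigma$ converts $(-1)^{i-1}\sgn(\pi)$ into $(-1)^{k-1}\sgn(\sigma)$ in the ascending case and into $-(-1)^{k-1}\sgn(\sigma)$ in the descending case, matching the signs built into $\tilde{l}_r^{(j)}$ and $\tilde{l}_r^{(j-1)}$ together with the $(-1)^{k-1}$ coming from $\partial_r$. This is a routine inversion count in $B_r$.

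The main obstacle I anticipate is the boundary term $i=r+1$, which merges the last permuted block into the distinguished final block $P_{r+2}$ (matching the $k=r+1$ term of $\partial_r$ on the right). Here the transposition $s_{r+1}$ is unavailable, the final block is not permuted by $B_{r+1}$, and the type B descent at position $0$, namely the sign of $\pi(1)$, must be tracked. I expect this case to require separate treatment via the sign-change generator $s_0$ and a direct analysis of how merging into $P_{r+2}$ interacts with the condition, in Definition~\ref{def:colcomplex}, that exactly one of each $\{j,-j\}$ lie in $P_{r+2}$. This is precisely where the hyperoctahedral structure (rather than the symmetric-group structure of the type A argument) is essential, and where the hypotheses $0\le r\le n-2$ and $1\le j\le r+1$ enter.
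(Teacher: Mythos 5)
Your treatment of the interior cases $1 \le i \le r$ is essentially the paper's own argument: the split of the terms $d_i\pi$ into reducible ones (where $\pm i$ and $\pm(i+1)$ occupy adjacent window positions with equal signs) versus cancelling ones, the sign-reversing involution $\pi \mapsto (i,i+1)\pi$ with $d_i(i,i+1)\pi = d_i\pi$, the descent dichotomy, and the ascending/descending un-contraction count with its sign bookkeeping all match the paper's five subcases and are correct. The genuine gap is the boundary case $i = r+1$, which you explicitly defer and whose mechanism you misidentify. This case cannot be an afterthought: it is the sole source of every term $\sigma d_{r+1}$ on the right-hand side, and it is the first (fully worked) case of the paper's proof. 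What makes it work is the element $I_{r+1} \in B_{r+1}$ that negates the value $r+1$ and fixes all other values --- a reflection of odd length, conjugate to $s_0$ but not $s_0$ itself. The crucial identity is $d_{r+1}I_{r+1} = d_{r+1}$: negating the block in window position $r+1$ before merging it into the distinguished final block changes nothing, because in any face of $\Delta_G$ the final block already contains the negatives of all elements of the earlier blocks. Granting this, $\pi \mapsto I_{r+1}\pi$ is a sign-reversing, descent-preserving involution on the set of $\pi \in L^{j}_{r+1}$ with $\pi_{r+1} \neq \pm(r+1)$, cancelling those terms; the surviving $\pi$ with $\pi_{r+1} = r+1$ restrict to $\sigma = \pi_1\cdots\pi_r \in B_r$ with $\des(\sigma)=\des(\pi)$, $\sgn(\sigma)=\sgn(\pi)$, and $d_{r+1}\pi = \sigma d_{r+1}$, while those with $\pi_{r+1} = -(r+1)$ restrict with $\des(\sigma)=\des(\pi)-1$ and $\sgn(\sigma)=-\sgn(\pi)$; together these produce exactly $\bigl(\tilde{l}_{r}^{(j)} + \tilde{l}_{r}^{(j-1)}\bigr)d_{r+1}$.

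Your sketch instead points at $s_0$ and at tracking the type B descent at position $0$ (the sign of $\pi(1)$), but that descent plays no special role here: the relevant dichotomy concerns the \emph{last} window position, namely whether $\pi_{r+1} = \pm(r+1)$, not the first. Nor do the hypotheses $0 \le r \le n-2$ and $1 \le j \le r+1$ enter in the way you predict. Until you establish $d_{r+1}I_{r+1} = d_{r+1}$ (which requires the structural fact about the final block noted above) and run the cancellation/restriction dichotomy for $i = r+1$, all terms $\sigma d_{r+1}$ on the right-hand side are unaccounted for, so the proposed proof is incomplete.
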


\begin{remark}\label{rem:intertwine}
The following proof is similar to the proof of~\cite[Proposition~5.1]{bn} due to Bergeron and Bergeron.
For the sake of completeness, and because the cited proof contains a few confusing typographical errors, we include a proof here.
\end{remark}

\begin{proof}[Proof of Lemma~\ref{lem:lintertwine}]
Setting 
\[
L_{r+1}^{j} := \{ \pi \in B_{r+1} \mid \des(\pi)=j \} \, ,
\]
the strategy is to consider 
\begin{equation}\label{eq:dsum}
d_i\tilde{l}_{r+1}^{(j)}=d_i(-1)^{j-1}\sum_{\pi\in L_{r+1}^j}\sgn(\pi)\pi=(-1)^{j-1}\sum_{\pi\in L_{r+1}^j}\sgn(\pi)d_i\pi
\end{equation}
for each $i=1,\ldots,r+1$.
For some of the elements $d_i\pi\in d_iL_{r+1}^j$, we will produce $\sigma\in L_{r+1}^{j}$ such that $d_i\sgn(\pi)\pi=-d_i\sgn(\sigma)\sigma$, and thus these terms will cancel pairwise in \eqref{eq:dsum}.
For all other elements, we will bijectively map each element in $d_iL_{r+1}^{j}$ to an element appearing as a summand of
\begin{align*}
\left(\tilde{l}_{r}^{(j)} + \tilde{l}_{r}^{(j-1)}\right)\partial_r & = \sum_{i=1}^{r+1}(-1)^{i-1}\left((-1)^{j-1}\sum_{\sigma\in L_r^j}\sgn(\sigma)\sigma + (-1)^{j-2}\sum_{\sigma\in L_r^{j-1}}\sgn(\sigma)\sigma \right)d_i \\
& = \sum_{i=1}^{r+1}(-1)^{i-1}\left((-1)^{j-1}\sum_{\sigma\in L_r^j}\sgn(\sigma)\sigma d_i + (-1)^{j-2}\sum_{\sigma\in L_r^{j-1}}\sgn(\sigma)\sigma d_i \right) \, ,
\end{align*}
showing that $d_i\pi$ corresponds to a term (with correct sign) $\sigma d_{s}$ in a bijection
\begin{equation}\label{eq:bij}
\bigcup_{i=1}^{r+1} d_iL_{r+1}^j \setminus \{\text{pairwise canceling elements}\} \longleftrightarrow \left( \bigcup_{i=1}^{r+1}L_r^jd_i\right) \bigcup \left( \bigcup_{i=1}^{r+1}L_r^{j-1}d_i\right) \, .
\end{equation}
Doing so will yield our desired equality.
As this becomes a lengthy exercise of case-by-case analysis, we will completely prove some of the cases and provide only the setup for the rest.

\textbf{Case:} $i=r+1$.
\smallskip 

Let $I_{r+1}$ denote the element of $B_{r+1}$ that sends $r+1$ to $-(r+1)$ and fixes all other elements; note that $\ell(I_{r+1})$ is odd, as 
\[
I_{r+1}=(r,r+1)\cdots (2,3)(1,2)s_0 (1,2)(2,3)\cdots (r,r+1) \, ,
\]
where $s_0$ is the generator of $B_{r+1}$ sending $1$ to $-1$ and fixing all other elements.
Next observe that $d_{r+1}I_{r+1}=d_{r+1}$, because when the $(r+1)$-st and $(r+2)$-nd blocks of an ordered partition are merged, all the elements of the $(r+1)$-st block appear in the merged block with all possible signs.

Let $\pi\in L_{r+1}^j$ and let $\sigma=I_{r+1}\pi$; it follows from the length of $I_{r+1}$ being odd that $\sgn(\pi)=-\sgn(\sigma)$, by \cite[Proposition~1.4.2~(ii)]{bn}.
Note that $\sigma$ and $\pi$ differ only in that the $\pm(r+1)$ appearing in $\pi$ is negated, but does not change position.
If $\pi_{r+1}\neq \pm (r+1)$, then $\sigma=I_{r+1}\pi$ has the same number of descents as $\pi$, since if $r+1$ is in position $j$, then $\pi$ is forced to have an ascent in position $j-1$ and a descent in position $j$, while if $-(r+1)$ is in position $j$, then $\pi$ is forced to have a descent in position $j-1$ and an ascent in position $j$.
Hence $d_{r+1}\sgn(\sigma)\sigma=d_{r+1}I_{r+1}(-\sgn(\pi))\pi=-d_{r+1}\sgn(\pi)\pi$, and the pair of terms $d_{r+1}\sgn(\sigma)\sigma$ and $d_{r+1}\sgn(\pi)\pi$ cancel each other in \eqref{eq:dsum}.

Suppose now that $\pi_{r+1}=r+1$.
Setting $\sigma=\pi_1\pi_2\cdots \pi_r$, it is straightforward to verify that
\begin{itemize}
\item $\sigma\in B_{r}$, 
\item $d_{r+1}\pi=\sigma d_{r+1}$, 
\item $\des(\sigma)=\des(\pi)$, and
\item $\sgn(\pi)=\sgn(\sigma)$.
\end{itemize}
On the other hand, if $\pi_{r+1}=-(r+1)$, then setting $\sigma=\pi_1\pi_2\cdots \pi_r$ it is again straightforward to verify that
\begin{itemize}
\item $\sigma\in B_{r}$,
\item $d_{r+1}\pi=\sigma d_{r+1}$,
\item $\des(\sigma)=\des(\pi)-1$, and 
\item$\sgn(\sigma)=-\sgn(\pi)$.
\end{itemize}
Mapping $d_{r+1}\pi$ to $\sigma d_{r+1}$ in our correspondence yields a bijection (with correct signs) pairing an element $d_{r+1}\pi\in d_{r+1}L_{r+1}^j$ satisfying $\pi_{r+1}=\pm(r+1)$ with an element of $L_r^jd_{r+1}\cup L_{r}^{j-1}d_{r+1}$, leading to the equality
\[
d_{r+1}\tilde{l}_{r+1}^{(j)} = \left(\tilde{l}_{r}^{(j)} + \tilde{l}_{r}^{(j-1)}\right)d_{r+1} \, .
\]

\textbf{Case:} $1 \leq i \leq r$.  
\smallskip

For each $i$ and each $\pi\in L_{r+1}^j$, the relative position of $\pm i$ and $\pm(i+1)$ in the window for $\pi$ determines how $\pi$ is handled.
There are five situations that can occur:
\begin{itemize}
\item $\pi^{-1}(i+1) = \pi^{-1}(i) + 1 > 0$, implying that $\pi=[\cdots i\, (i+1)\cdots]$
\item $\pi^{-1}(i+1) = \pi^{-1}(i) + 1 < 0$, implying that $\pi=[\cdots -(i+1)\, -i \cdots]$
\item $\pi^{-1}(i+1) = \pi^{-1}(i) - 1 > 0$, implying that $\pi=[\cdots (i+1)\, \,  i\cdots]$
\item $\pi^{-1}(i+1) = \pi^{-1}(i) - 1 < 0$, implying that $\pi=[\cdots -i\, \,  -(i+1)\cdots]$
\item $\pi^{-1}(i+1) \neq \pi^{-1}(i) \pm 1$, containing all remaining cases.
\end{itemize}
\smallskip

We sketch below how to assign to each $d_i\pi$ a unique $\sigma d_s$ in each of these cases, and provide at the end a proof that these assignments are bijective as claimed in~\eqref{eq:bij}.

\textbf{Subcase:} Suppose $\pi^{-1}(i+1) = \pi^{-1}(i) + 1 > 0$, implying that $\pi=[\cdots i\, (i+1)\cdots]$.
\smallskip

Define $s$ as the index such that $\pi_s = i$.
We want to associate to $\pi$ a unique $\sigma\in L_r^j$ with the following properties:
\begin{itemize}
  \item $\des(\sigma) = \des(\pi)$,
  \item $\sgn(\sigma) = (-1)^{i-s}\sgn(\pi)$, and
  \item $d_i \pi = \sigma d_s$.
\end{itemize}
We claim that these properties are satisfied by $\sigma = \sigma_{1} \hdots \sigma_{s} \sigma_{s+2} \hdots \sigma_{r+1}$ where
\begin{itemize}
\item $\sigma_{m} = \pi_{m}$ if $|\pi_{m}| < i+1$ 
\item $|\sigma_{m}| = |\pi_m | -1$ if $|\pi_m| > i + 1$, and
\item the sign pattern for $\sigma$ is the same as that for $\pi$, i.e. if $\pi_j<0$, then $\sigma_j$ is also negative.
\end{itemize}

To prove that $\des(\sigma)=\des(\pi)$, note that all pairwise inequality relationships are preserved between $\pi$ and $\sigma$; thus, the only possible position of an additional descent in $\pi$ that does not occur in $\sigma$ is between $\pi_s=i$ and $\pi_{s+1}=i+1$, where no descent occurs.

To show that $\sgn(\sigma) = (-1)^{i-s}\sgn(\pi)$, observe that there exist $i-s$ adjacent transpositions $t_1,\ldots,t_{i-s}$ in $B_{r+1}$ such that $\pi t_1\cdots t_{i-s}$ has $i+1$ appearing in window position $i+1$; we do so by exchanging adjacent entries in the window for $\pi$ repeatedly to bring $i+1$ from position $s+1$ to position $i+1$.
Then, $\sigma$ is obtained from $\pi t_1\cdots t_{i-s}$ by deleting position $i+1$ and lowering the label for all window elements greater than $i+1$.
Thus, both $\sigma$ and $\pi t_1\cdots t_{i-s}$ can be expressed as a product of the same number of adjacent transpositions and, for each negative element appearing in the window for $\pi$, an odd number of hyperoctahedral group Coxeter generators.
Thus, the length of these two elements have the same parity, and our result follows.

Finally, to show $d_i \pi = \sigma d_s$ it suffices to show that the $m$-th block in the image of $(P_1 \mid \cdots \mid P_{r+1} \mid P_{r+2})$ is the same under $d_i \pi$ and $\sigma d_s$.  
First, suppose that $|\pi_m| = k < i $.  
Then $d_i \pi$ will map $P_m$ to the $k$-th block location in the image if $\pi_m > 0$, or to $P_{-m}$ in the $k$-th block location in the image if $\pi_m < 0$.  
If $m < s$, then $P_m$ will still be the $m$-th block in the image after $d_s$ is applied.  
Since $k < i $, $\sigma_m = \pi_m$, so $\sigma$ will map $P_m$ to the $k$-th block location in the image if $\pi_m > 0$, or to $P_{-m}$ in the $k$-th block location in the image if $\pi_m < 0$.  
If $m > s+1$, then $P_m$ will be in the $(m-1)$-st location after $d_s$ is applied.  
Notice though that by the definition of $\sigma$, this implies that $\sigma_m$ is in the $(m-1)$-st position of $\sigma$.  
Since $k < i $, $\sigma_m = \pi_m$, and thus $\sigma$ will map $P_m$ to the $k$-th block location in the image if $\pi_m > 0$, or to $P_{-m}$ in the $k$-th block location in the image if $\pi_m < 0$.  

Now suppose that $|\pi_m| = k > i+1 $.  
Then $d_i \pi$ will map $P_m$ to the $(k-1)$-st block in the image if $\pi_m > 0$ or to $P_{-m}$ in the $(k-1)$-st block in the image if $\pi_m < 0$.
If $m < s$, then $P_m$ will still be the $m$-th block in the image after $d_s$ is applied.  
Since $k > i+1 $, $|\sigma_m| = |\pi_m| - 1$, so $\sigma$ will map $P_m$ to the $(k-1)$-st block location in the image if $\pi_m > 0$, or to $P_{-m}$ in the $(k-1)$-st block location in the image if $\pi_m < 0$.  
If $m > s+1$, then $P_m$ will be in the $(m-1)$-st location after $d_s$ is applied.  
Notice though that by the definition of $\sigma$, this implies that $\sigma_m$ is in the $(m-1)$-st position of $\sigma$.  
Since $k > i + 1 $, $|\sigma_m| = |\pi_m| - 1$, and thus $\sigma$ will map $P_m$ to the $(k-1)$-st block location in the image if $\pi_m > 0$, or to $P_{-m}$ in the $(k-1)$-st block location in the image if $\pi_m < 0$.  

Now suppose that $m = s$.  Then $d_i \pi$ will map $P_s$ to $P_s \cup P_{s+1}$ in the $i$-th location.  
$d_s$ will map $P_s$ to $P_s \cup P_{s+1}$, and since $P_s \cup P_{s+1}$ is in the $s$-th block, $\sigma d_s$ also maps $P_s$ to $P_s \cup P_{s+1}$ in the $i$-th block.
\smallskip

\textbf{Subcase:} Suppose $\pi^{-1}(i+1) = \pi^{-1}(i) + 1 < 0$, implying that $\pi=[\cdots -(i+1)\, -i \cdots]$.
\smallskip

Let $s$ be defined as the index such that $\pi_s = -i-1$.  
It suffices to show that $\sigma$ defined as follows satisfies $\des(\sigma) = \des(\pi)$, $\sgn(\sigma) = (-1)^{i-s} \sgn(\pi)$, and $d_i \pi = \sigma d_s$.
Let $\sigma = \sigma_{1} \hdots \sigma_{s} \sigma_{s+2} \hdots \sigma_{r+1}$ where 
\begin{itemize}
\item $\sigma_{m} = \pi_{m}$ if $|\pi_{m}| < i$ 
\item $|\sigma_{m}| = |\pi_m | -1$ if $|\pi_m| > i $, and
\item the sign pattern for $\sigma$ is the same as that for $\pi$, i.e. if $\pi_j<0$, then $\sigma_j$ is also negative. 
\end{itemize}
Thus, $\sigma d_s$ is the element in $L_r^jd_s$ uniquely paired with $d_i\pi$.
This argument is similar to the previous subcase.

\commentout{


It suffices to show that the $m$-th block in the image of $(P_1 \mid \cdots \mid P_{r+1} \mid P_{r+2})$ is the same under $d_i \pi$ and $\sigma d_s$.  First, suppose that $|\pi_m| = k < i $.  Then $d_i \pi$ will map $P_m$ to the $k$-th block location in the image if $\pi_m > 0$, or to $\overline{P_m}$ in the $k$-th block location in the image if $\pi_m < 0$.  If $m < s$, then $P_m$ will still be the $m$-th block in the image after $d_s$ is applied.  Since $k < i $, $\sigma_m = \pi_m$, so $\sigma$ will map $P_m$ to the $k$-th block location in the image if $\pi_m > 0$, or to $\overline{P_m}$ in the $k$-th block location in the image if $\pi_m < 0$.  If $m > s+1$, then $P_m$ will be in the $m-1$-st location after $d_s$ is applied.  Notice though that by the definition of $\sigma$, this implies that $\sigma_m$ is in the $m-1$-st position of $\sigma$.  Since $k < i $, $\sigma_m = \pi_m$, and thus $\sigma$ will map $P_m$ to the $k$-th block location in the image if $\pi_m > 0$, or to $\overline{P_m}$ in the $k$-th block location in the image if $\pi_m < 0$.  

Now suppose that $|\pi_m| = k > i$.  Then $d_i \pi$ will map $P_m$ to the $k-1$-st block in the image if $\pi_m > 0$ or to $\overline{P_m}$ in the $k-1$st block in the image if $\pi_m < 0$.  If $m < s$, then $P_m$ will still be the $m$-th block in the image after $d_s$ is applied.  Since $k > i $, $|\sigma_m| = |\pi_m| - 1$, so $\sigma$ will map $P_m$ to the $k-1$-st block location in the image if $\pi_m > 0$, or to $\overline{P_m}$ in the $k-1$-st block location in the image if $\pi_m < 0$.  If $m > s+1$, then $P_m$ will be in the $m-1$-st location after $d_s$ is applied.  Notice though that by the definition of $\sigma$, this implies that $\sigma_m$ is in the $m-1$-st position of $\sigma$.  Since $k > i $, $|\sigma_m| = |\pi_m| - 1$, and thus $\sigma$ will map $P_m$ to the $k-1$-st block location in the image if $\pi_m > 0$, or to $\overline{P_m}$ in the $k-1$-st block location in the image if $\pi_m < 0$.  

Now suppose that $m = s$.  $\pi$ will map $P_s$ to $\overline{P_s}$ in the $i+1$-st location and $P_{s+1}$ to $\overline{P_{s+1}}$ in the $i$-th location, and thus $d_i \pi$ will map both $P_s$ and $P_{s+1}$ to $\overline{P_s} \cup \overline{P_{s+1}}$ in the $i$-th location.  $d_s$ will map $P_s$ and $P_{s+1}$ to $P_s \cup P_{s+1}$ in the $s$-th location.  Thus, $\sigma d_s$ maps $\overline{P_s} \cup \overline{P_{s+1}}$ in the $i$-th block.
}

\smallskip

\textbf{Subcase:} Suppose $\pi^{-1}(i+1) = \pi^{-1}(i) - 1 > 0$, implying that $\pi=[\cdots (i+1)\, \,  i\cdots]$.
\smallskip

Let $s$ be defined as the index such that $\pi_s = i + 1$.  
It suffices to show that $\sigma$ defined as follows satisfies, $\des(\sigma) = \des(\pi) - 1$, $\sgn(\sigma) = (-1)^{i-s+1} \sgn(\pi)$, and $d_i \pi = \sigma d_s$. 
Let $\sigma = \sigma_{1} \hdots \sigma_{s-1} \sigma_{s+1} \hdots \sigma_{r+1}$ where 
\begin{itemize}
\item $\sigma_{m} = \pi_{m}$ if $|\pi_{m}| < i + 1$,
\item $|\sigma_{m}| = |\pi_m | -1$ if $|\pi_m| > i + 1$, and
\item the sign pattern for $\sigma$ is the same as that for $\pi$, i.e. if $\pi_j<0$, then $\sigma_j$ is also negative. 
\end{itemize}
Thus, $\sigma d_s$ is the element of $L_r^{j-1} d_s$ uniquely paired with $d_i\pi$.
This argument is similar to the previous subcase.
Note that the property $\sgn(\sigma) = (-1)^{i-s+1} \sgn(\pi)$ is necessary because while $\pi\in L_{r+1}^j$, we have that $\sigma\in L_{r}^{j-1}$ and thus in $\tilde{l}_r^{(j-1)}$ we have that $\sgn(\sigma)\sigma$ is multiplied by $(-1)^{j-1}$ rather than $(-1)^j$.

\commentout{


It suffices to show that the $m$-th block in the image of $(P_1 \mid \cdots \mid P_{r+1} \mid P_{r+2})$ is the same under $d_i \pi$ and $\sigma d_s$.  First, suppose that $|\pi_m| = k < i + 1$.  Then $d_i \pi$ will map $P_m$ to the $k$-th block location in the image if $\pi_m > 0$, or to $\overline{P_m}$ in the $k$-th block location in the image if $\pi_m < 0$.  If $m < s$, then $P_m$ will still be the $m$-th block in the image after $d_s$ is applied.  Since $k < i + 1$, $\sigma_m = \pi_m$, so $\sigma$ will map $P_m$ to the $k$-th block location in the image if $\pi_m > 0$, or to $\overline{P_m}$ in the $k$-th block location in the image if $\pi_m < 0$.  If $m > s+1$, then $P_m$ will be in the $m-1$-st location after $d_s$ is applied.  Notice though that by the definition of $\sigma$, this implies that $\sigma_m$ is in the $m-1$-st position of $\sigma$.  Since $k < i + 1$, $\sigma_m = \pi_m$, and thus $\sigma$ will map $P_m$ to the $k$-th block location in the image if $\pi_m > 0$, or to $\overline{P_m}$ in the $k$-th block location in the image if $\pi_m < 0$.  

Now suppose that $|\pi_m| = k > i + 1$.  Then $d_i \pi$ will map $P_m$ to the $k-1$-st block in the image if $\pi_m > 0$ or to $\overline{P_m}$ in the $k-1$st block in the image if $\pi_m < 0$.   If $m < s$, then $P_m$ will still be the $m$-th block in the image after $d_s$ is applied.  Since $k > i+1 $, $|\sigma_m| = |\pi_m| - 1$, so $\sigma$ will map $P_m$ to the $k-1$-st block location in the image if $\pi_m > 0$, or to $\overline{P_m}$ in the $k-1$-st block location in the image if $\pi_m < 0$.  If $m > s+1$, then $P_m$ will be in the $m-1$-st location after $d_s$ is applied.  Notice though that by the definition of $\sigma$, this implies that $\sigma_m$ is in the $m-1$-st position of $\sigma$.  Since $k > i + 1 $, $|\sigma_m| = |\pi_m| - 1$, and thus $\sigma$ will map $P_m$ to the $k-1$-st block location in the image if $\pi_m > 0$, or to $\overline{P_m}$ in the $k-1$-st block location in the image if $\pi_m < 0$.  

Now suppose that $m = s$.  In this case, $\pi$ will map $P_s$ to the $i+1$-st block and $P_{s+1}$ to the $i$-th block.  Thus, $d_i \pi$ will map both $P_s$ and $P_{s+1}$ to $P_s \cup P_{s+1}$ in the $i$-th block.  $d_s$ will map $P_s$ to $P_s \cup P_{s+1}$, and since $P_s \cup P_{s+1}$ is in the $s$-th block, and the $s$-th position of $\sigma$ is $\sigma_{s+1}$, $\sigma d_s$ maps $P_s$ to $P_s \cup P_{s+1}$ in the $i$-th block.
}

\smallskip

\textbf{Subcase:} Suppose $\pi^{-1}(i+1) = \pi^{-1}(i) - 1 < 0$, implying that $\pi=[\cdots -i\, \,  -(i+1)\cdots]$.
\smallskip

Let $s$ be the index such that $\pi_s = -i $.  
It suffices to show that $\sigma$ defined as follows satisfies $\des(\sigma) = \des(\pi) - 1$, $\sgn(\sigma) = (-1)^{i-s+1} \sgn(\pi)$, and $d_i \pi = \sigma d_s$.
Let $\sigma = \sigma_{1} \hdots \sigma_{s-1} \sigma_{s+1} \hdots \sigma_{r+1}$ where 
\begin{itemize}
\item $\sigma_{m} = \pi_{m}$ if $|\pi_{m}| < i $,
\item $|\sigma_{m}| = |\pi_m | -1$ if $|\pi_m| > i $, and
\item the sign pattern for $\sigma$ is the same as that for $\pi$, i.e. if $\pi_j<0$, then $\sigma_j$ is also negative. 
\end{itemize}
Thus, $\sigma d_s$ is the element of $L_r^{j-1}d_s$ uniquely paired with $d_i\pi$.
This argument is similar to the previous subcase, and the same comment as in the previous subcase about $\sgn(\sigma) = (-1)^{i-s+1} \sgn(\pi)$ applies.

\commentout{


It suffices to show that the $m$-th block in the image of $(P_1 \mid \cdots \mid P_{r+1} \mid P_{r+2})$ is the same under $d_i \pi$ and $\sigma d_s$.  First, suppose that $|\pi_m| = k < i $.  Then $d_i \pi$ will map $P_m$ to the $k$-th block location in the image if $\pi_m > 0$, or to $\overline{P_m}$ in the $k$-th block location in the image if $\pi_m < 0$.  If $m < s$, then $P_m$ will still be the $m$-th block in the image after $d_s$ is applied.  Since $k < i $, $\sigma_m = \pi_m$, so $\sigma$ will map $P_m$ to the $k$-th block location in the image if $\pi_m > 0$, or to $\overline{P_m}$ in the $k$-th block location in the image if $\pi_m < 0$.  If $m > s + 1$, then $P_m$ will be in the $m-1$-st location after $d_s$ is applied.  Notice though that by the definition of $\sigma$, this implies that $\sigma_m$ is in the $m-1$-st position of $\sigma$.  Since $k < i $, $\sigma_m = \pi_m$, and thus $\sigma$ will map $P_m$ to the $k$-th block location in the image if $\pi_m > 0$, or to $\overline{P_m}$ in the $k$-th block location in the image if $\pi_m < 0$.  

Now suppose that $|\pi_m| = k > i $.  Then $d_i \pi$ will map $P_m$ to the $k-1$-st block in the image if $\pi_m > 0$ or to $\overline{P_m}$ in the $k-1$st block in the image if $\pi_m < 0$.   If $m < s$, then $P_m$ will still be the $m$-th block in the image after $d_s$ is applied.  Since $k > i$, $|\sigma_m| = |\pi_m| - 1$, so $\sigma$ will map $P_m$ to the $k-1$-st block location in the image if $\pi_m > 0$, or to $\overline{P_m}$ in the $k-1$-st block location in the image if $\pi_m < 0$.  If $m > s+1$, then $P_m$ will be in the $m-1$-st location after $d_s$ is applied.  Notice though that by the definition of $\sigma$, this implies that $\sigma_m$ is in the $m-1$-st position of $\sigma$.  Since $k > i $, $|\sigma_m| = |\pi_m| - 1$, and thus $\sigma$ will map $P_m$ to the $k-1$-st block location in the image if $\pi_m > 0$, or to $\overline{P_m}$ in the $k-1$-st block location in the image if $\pi_m < 0$.  

Now suppose that $m = s$.  In this case, $\pi$ will map $P_s$ to $\overline{P_s}$ in the $i$-th block and $P_{s+1}$ to $\overline{P_{s+1}}$ in the $i+1$-st block.  Thus, $d_i \pi$ will map both $P_s$ and $P_{s+1}$ to $\overline{P_s} \cup \overline{P_{s+1}}$ in the $i$-th block.  $d_s$ will map $P_s$ to $P_s \cup P_{s+1}$, and since $P_s \cup P_{s+1}$ is in the $s$-th block, and the $s$-th position of $\sigma$ is $\sigma_{s+1}$, $\sigma d_s$ maps $P_s$ to $\overline{P_s} \cup \overline{P_{s+1}}$ in the $i$-th block.
}

\smallskip

\textbf{Subcase:} Suppose $\pi^{-1}(i+1) \neq \pi^{-1}(i) \pm 1$.
\smallskip

Setting $\sigma = (i, i+1)\pi$, one can show that $\sgn(\pi)=-\sgn(\sigma)$, $\des(\pi)=\des(\sigma)$, and that $d_i\pi=d_i\sigma$.
Thus, the terms corresponding to $d_i\pi$ and $d_i\sigma$ cancel in \eqref{eq:dsum}.

\commentout{


Let $t = \pi^{-1}(i+1)$ and $s = \pi^{-1}(i)$.   
If $\pi_{t-1} > \pi_{t} = i+1$, then $\pi_{t-1} > \pi_s = i$.  
If $\pi_{t-1} < \pi_{t} = i+1$, then since $s \neq t-1$, $\pi_{t-1} < i = \pi_s$.  
Similarly, if $\pi_{t+1} > \pi_t$, then $\pi_{t+1} > \pi_s$, and if $\pi_{t+1} < \pi_t$, then $\pi_{t+1} < \pi_s$.  
It follows that $\des(\pi) = \des(\sigma)$. 
By \cite[Proposition~1.4.2~(iii)]{bb}, since $(i,i+1)$ is a simple generator for $B_{r+1}$,
\[
\ell_{B_{r+1}}(\sigma) =  \ell_{B_{r+1}}( (i, i+1)\pi ) = \ell_{B_{r+1}}(\pi)\pm 1 \, .
\]
Thus, $\sgn(\pi) = -\sgn(\sigma)$.
\smallskip

\todo{
Check case if case where $\pi_s = -i$ and $\pi_t = -i-1$.

\color{blue}
Do we need to do this?  I am not seeing where it arises.
\color{black}
}
\smallskip

Consider 
\begin{align*}
& \phantom{.=} d_i \sigma (P_1 \mid \cdots \mid P_{r+1} \mid P_{r+2}) \\
& = d_i (i, i+1) (P_{\pi^{-1}(1)} \mid \cdots \mid P_{\pi^{-1}(i-1)} \mid P_{\pi^{-1}(i)} \mid P_{\pi^{-1}(i+1)} \mid \cdots \mid P_{\pi^{-1}(r+1)} \mid P_{r+2}')   \\
& = d_i (P_{\pi^{-1}(1)} \mid \cdots \mid P_{\pi^{-1}(i-1)} \mid P_{\pi^{-1}(i+1)} \mid P_{\pi^{-1}(i)} \mid \cdots \mid P_{\pi^{-1}(r+1)} \mid P_{r+2}') \\
& = (P_{\pi^{-1}(1)} \mid \cdots \mid P_{\pi^{-1}(i-1)} \mid P_{\pi^{-1}(i)} \cup P_{\pi^{-1}(i+1)} \mid \cdots \mid P_{\pi^{-1}(r+1)} \mid P_{r+2}') \\
& = d_i \pi (P_1 \mid \cdots \mid P_{r+1} \mid P_{r+2}) \, . 
\end{align*}
Therefore, $d_i \sigma = d_i \pi$, and the terms corresponding to $\pi$ and $\sigma$ in $\tilde{l}_{r+1}^{(j)}$ cancel in $d_i\tilde{l}_{r+1}^{(j)}$, since applying this process to $\sigma=(i,i+1)\pi$ yields $\pi$, and our pairing is well-defined.
\smallskip
}

\smallskip

\textbf{Unique bijection:} 
To see that our correspondences above are bijective, consider an element $\sigma d_s\in L_r^{j}d_s$, where $\sigma=\sigma_1\cdots \sigma_{s-1}\sigma_s\sigma_{s+1}\cdots \sigma_r$.
Then $\sigma d_s$ is obtainable from some $d_j\pi$ via our above process if $\sigma$ was obtained by deleting the $s$-th or $(s+1)$-st element from $\pi$, i.e.
\[
\text{Case A: } \sigma=[\sigma_1\cdots \sigma_s\underbrace{\phantom{..}}_{\substack{\pi_{s+1}\\ \text{dropped}\\ \text{here}}}\sigma_{s+1}\cdots \sigma_r] \text{  or  Case B: } \sigma=[\sigma_1\cdots \sigma_{s-1}\underbrace{\phantom{..}}_{\substack{\pi_{s}\\ \text{dropped}\\ \text{here}}}\sigma_{s}\cdots \sigma_r] \, .
\]
Considering our claimed bijective map described above, in both Case A and Case B it is the element $\sigma_s$ that determined which element of $\pi$ was dropped.
Suppose that $\sigma_s=\pm i$.
For each of Case A and Case B, a fixed parity for $\sigma_s$ yields a unique $\pi,j$ such that $d_j \pi$ maps to $\sigma d_s$ under our map, and hence our claimed bijection~\eqref{eq:bij} is established.
\end{proof}

\begin{lemma}\label{lem:intertwine}
\[
\partial_r\tilde{\lambda}_{r+1}^{(j)}=\tilde{\lambda}_{r}^{(j)}\partial_r \phantom{.....}\text{and}\phantom{......}\partial_r\tilde{\rho}_{r+1}^{(j)}=\tilde{\rho}_{r}^{(j)}\partial_r
\]
\end{lemma}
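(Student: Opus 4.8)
The plan is to prove the two identities in sequence: first the $\tilde{\lambda}$-relation, using only Lemma~\ref{lem:lintertwine} together with a binomial identity, and then the $\tilde{\rho}$-relation, by bootstrapping off the $\tilde{\lambda}$-relation through the evaluation formula~\eqref{eqn:lambdarho} and a degree-counting argument on operator-valued polynomials.

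For the first identity, I would expand $\tilde{\lambda}_{r+1}^{(j)}$ by definition and move $\partial_r$ inside the sum to obtain
\[
\partial_r\tilde{\lambda}_{r+1}^{(j)}=\sum_{i=0}^{j}(-1)^i\binom{r+1+i}{i}\,\partial_r\tilde{l}_{r+1}^{(j-i)} \, .
\]
Applying Lemma~\ref{lem:lintertwine} to each summand replaces $\partial_r\tilde{l}_{r+1}^{(j-i)}$ by $\bigl(\tilde{l}_r^{(j-i)}+\tilde{l}_r^{(j-i-1)}\bigr)\partial_r$. Collecting, for each $m$, the total coefficient of $\tilde{l}_r^{(j-m)}\partial_r$ gives
\[
(-1)^m\binom{r+1+m}{m}+(-1)^{m-1}\binom{r+m}{m-1}=(-1)^m\binom{r+m}{m} \, ,
\]
by Pascal's rule, which is exactly the coefficient of $\tilde{l}_r^{(j-m)}$ in $\tilde{\lambda}_r^{(j)}$. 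Hence $\partial_r\tilde{\lambda}_{r+1}^{(j)}=\tilde{\lambda}_r^{(j)}\partial_r$. Before running this, I would fix the boundary conventions $\tilde{l}_n^{(-1)}:=0$ and $\tilde{l}_n^{(m)}:=0$ for $m>n$, and verify that the intertwining relation of Lemma~\ref{lem:lintertwine} persists at the boundary index $j=0$ (where $\tilde{l}_n^{(0)}=-\id$, so both sides reduce to $-\partial_r$) and trivially for $m>r+1$. This ensures the expansion above is valid for every $j\geq 0$.

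For the second identity I would pass to generating functions in a formal variable $x$. Set
\[
A(x):=\partial_r\tilde{\rho}_{r+1}(x)=\sum_{k}x^k\,\partial_r\tilde{\rho}_{r+1}^{(k)} \, ,\qquad B(x):=\tilde{\rho}_{r}(x)\partial_r=\sum_{k}x^k\,\tilde{\rho}_{r}^{(k)}\partial_r \, ,
\]
polynomials with coefficients in $\mathrm{Hom}(C_{r+1}(\Delta_G),C_r(\Delta_G))$ of degree at most $r+1$ and $r$ respectively, so $\deg(A-B)\leq r+1$. By~\eqref{eqn:lambdarho}, evaluating at $x=2j+1$ gives $A(2j+1)=(-1)^{j-1}\partial_r\tilde{\lambda}_{r+1}^{(j)}$ and $B(2j+1)=(-1)^{j-1}\tilde{\lambda}_r^{(j)}\partial_r$, so the first identity yields $A(2j+1)=B(2j+1)$ for every $j$. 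Taking $j=0,1,\ldots,r+1$ produces $r+2$ distinct points $1,3,\ldots,2r+3$ at which $A-B$ vanishes; applying the Vandermonde/interpolation argument entrywise forces $A\equiv B$ as polynomials. Matching coefficients of $x^k$ then gives $\partial_r\tilde{\rho}_{r+1}^{(k)}=\tilde{\rho}_r^{(k)}\partial_r$ for $0\leq k\leq r$, while the coefficient of $x^{r+1}$ forces $\partial_r\tilde{\rho}_{r+1}^{(r+1)}=0$; with the convention $\tilde{\rho}_r^{(r+1)}:=0$ these combine into the claimed relation for all $j$.

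The binomial bookkeeping and the formal evaluation are routine, so the main obstacle is making the bootstrap from $\tilde{\lambda}$ to $\tilde{\rho}$ rigorous. The subtlety is that the Lagrange coefficients expressing $\tilde{\rho}_n^{(k)}$ in terms of the evaluations $\tilde{\rho}_n(2j+1)$ depend on $n$, so one cannot naively substitute and intertwine term by term; the degree count on $A-B$ is precisely what circumvents this. The argument only succeeds because I genuinely have $r+2$ valid evaluation points—which is why extending the first identity to the boundary indices $j=0$ and $j=r+1$ matters—and because the top coefficient of $A$ is forced to vanish, giving the $j=r+1$ case for free.
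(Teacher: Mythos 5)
Your proposal is correct and takes essentially the same route as the paper: the $\tilde{\lambda}$-relation is proved by expanding $\tilde{\lambda}_{r+1}^{(j)}$, applying Lemma~\ref{lem:lintertwine} termwise, and collecting coefficients via Pascal's rule, and the $\tilde{\rho}$-relation is then deduced from the evaluation identity~\eqref{eqn:lambdarho} together with Vandermonde invertibility, which is exactly your interpolation argument in generating-function clothing. If anything, your write-up is more careful than the paper's one-line ``change of basis,'' since you explicitly address the rank mismatch between the $B_{r+1}$ and $B_r$ systems through the degree count, the boundary cases $j=0$ and $j=r+1$, and the convention $\tilde{\rho}_r^{(r+1)}=0$.
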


\begin{proof}
Since 
\[
\tilde{\lambda}_{r+1}^{(j)}=\sum_{i=0}^j(-1)^i\binom{r+1+i}{i}\tilde{l}_{r+1}^{(j-i)} \, ,
\]
it follows from Lemma~\ref{lem:lintertwine} that
\begin{align*}
\partial_r\tilde{\lambda}_{r+1}^{(j)} & = \partial_r\sum_{i=0}^j(-1)^i\binom{r+1+i}{i}\tilde{l}_{r+1}^{(j-i)} \\
& = \sum_{i=0}^j(-1)^i\binom{r+1+i}{i}\left( \tilde{l}_{r}^{(j-i)}+\tilde{l}_{r}^{(j-i-1)} \right)\partial_r \\
& = \left[ (-1)^0\binom{r+1}{0}\left(\tilde{l}_{r}^{(j)}+\tilde{l}_{r}^{(j-1)}\right)+(-1)^1\binom{r+2}{1}\left(\tilde{l}_{r}^{(j-1)}+\tilde{l}_{r}^{(j-2)}\right)+ \cdots \right. \\
& \hspace{7mm} \left.  +(-1)^{j-1}\binom{r+1+j-1}{j-1}\left(\tilde{l}_{r}^{(1)}+\tilde{l}_{r}^{(0)}\right)+(-1)^{j}\binom{r+1+j}{j}\tilde{l}_{r}^{(0)}\right]\partial_r\\
& = \left[ (-1)^0\binom{r+1}{0}\tilde{l}_{r}^{(j)} + \left( (-1)^0\binom{r+1}{0}+(-1)^1\binom{r+2}{1}\right)\tilde{l}_{r}^{(j-1)} + \cdots \right. \\
& \hspace{7mm} \left.  +\left((-1)^{j-1}\binom{r+1+j-1}{j-1}+(-1)^j\binom{r+1}{j}\right)\tilde{l}_{r}^0 \right] \partial_r \\
& = \sum_{i=0}^j(-1)^i\binom{r+1}{i}\tilde{l}_{r}^{(j-i)} \partial_r \\
& = \tilde{\lambda}_{r}^{(j)}\partial_r \, ,
\end{align*}
which establishes the first claim.

For the second claim, note that due to the relation
\[
\tilde{\lambda}_{r+1}^{(j)}=(-1)^j\tilde{\rho}_{r+1}(2j+1) \, ,
\]
it follows that the $\tilde{\lambda}_{r+1}^{(j)}$'s and the $\tilde{\rho}_{r+1}^{(j)}$'s are related by an invertible Vandermonde matrix.
Changing basis in this manner from the first to the second set of elements establishes the second claim.
\end{proof}


\section{Chromatic polynomial coefficients and {H}odge decompositions}\label{sec:chromatic}

In this section we establish that the coefficients of $\chi_G(\lambda)$ encode (up to sign) the dimensions of the Hodge components for $H_{n-2}(\Delta_G)$.
First we must establish that Hodge decompositions are preserved by switching.

\begin{lemma}\label{lem:hodgeswitch}
If two signed graphs $G$ and $H$ are switching equivalent, then there is a chain complex isomorphism between $C_*(\Delta_G)$ and $C_*(\Delta_H)$ respecting the Hodge decompositions.
\end{lemma}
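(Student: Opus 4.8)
The plan is to construct an explicit chain complex isomorphism by tracking how switching at a single vertex transforms the coloring complex, and then extend to arbitrary switching sequences by composition. Since switching equivalence is generated by switches at individual vertices, it suffices to treat the case where $H$ is obtained from $G$ by switching at one vertex $v$. Recall from Definition~\ref{def:switch} that switching at $v$ negates the sign of each edge incident with $v$; geometrically this corresponds to the reflection $R_v$ on $\R^n$ sending $x_v \mapsto -x_v$ and fixing all other coordinates. This reflection permutes the hyperplanes of $\mathcal{B}_n$ (swapping $H_{vj}^{+1} \leftrightarrow H_{vj}^{-1}$ for each $j$ and fixing $H_v$ and all hyperplanes not involving $v$), hence carries $\mathcal{B}_G$ onto $\mathcal{B}_H$ and induces a simplicial homeomorphism $\Delta_G \to \Delta_H$.

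First I would describe the induced map on chains combinatorially. The reflection $R_v$ acts on $[n] \cup -[n]$ by the involution $s_v$ that swaps $v \leftrightarrow -v$ and fixes every other element; applied blockwise to an ordered set partition $P_1 \mid \cdots \mid P_{n}$, this sends each $P_i$ to $s_v(P_i)$. One checks directly that $A \subset [n]\cup -[n]$ contains an edge of $G$ if and only if $s_v(A)$ contains an edge of $H$, using the case analysis in the definition preceding Definition~\ref{def:colcomplex}: a positive $G$-edge $\{v,j\}$ becomes a negative $H$-edge, so the pair $\{v,j\} \subset A$ in $G$ corresponds to the pair $v \in A$, $-j \in s_v(A)$ (with $\{v, j\}$ a negative edge of $H$) in the second bullet, and symmetrically for negative $G$-edges; half-edges and edges not meeting $v$ are unaffected. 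Hence $s_v$ carries facets of $\Delta_G$ to facets of $\Delta_H$ and respects the merging of adjacent blocks, giving a bijection $\Phi_v$ of faces that is an isomorphism of simplicial complexes. Extending $\Phi_v$ linearly yields a chain isomorphism, and because $s_v$ acts only on the content of blocks and never permutes or merges them, it commutes with each $d_i$ and therefore with $\partial_r$.

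The key remaining step, and the main point of the lemma, is that $\Phi_v$ respects the Hodge decomposition, i.e. $\Phi_v\bigl(C_r^{(j)}(\Delta_G)\bigr) = C_r^{(j)}(\Delta_H)$. By definition $C_r^{(j)} = \tilde{\rho}_r^{(j)} C_r$, so it suffices to show that $\Phi_v$ commutes with the action of each $\pi \in B_{r+1}$, since then it commutes with the idempotent $\tilde{\rho}_r^{(j)} \in \C[B_{r+1}]$ built from that action. This is where I expect the bookkeeping to require care: the $B_{r+1}$-action of the previous section both permutes the first $r+1$ blocks by $\pi^{-1}$ and flips signs of whole blocks, modifying $P_{r+2}$ accordingly, while $\Phi_v$ flips the single pair $\{v,-v\}$ wherever it occurs. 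These two operations act on disjoint features of the data — $\pi$ permutes block positions and toggles block signs, whereas $s_v$ toggles membership of $v$ versus $-v$ within whatever block contains it — so they commute as operators on ordered set partitions. I would verify this by writing out $\pi \Phi_v(\tau)$ and $\Phi_v \pi(\tau)$ on a general basis element $\tau = (P_1 \mid \cdots \mid P_{r+2})$ and checking that both produce the same partition, the one whose $m$-th block is $s_v$ applied to the appropriately sign-adjusted $P_{\pi^{-1}(m)}$.

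Granting $\pi \Phi_v = \Phi_v \pi$ for all $\pi \in B_{r+1}$, linearity gives $\Phi_v \circ \tilde{\rho}_r^{(j)} = \tilde{\rho}_r^{(j)} \circ \Phi_v$, whence $\Phi_v$ maps $C_r^{(j)}(\Delta_G)$ isomorphically onto $C_r^{(j)}(\Delta_H)$ and intertwines the boundary maps by the commutation with $\partial_r$ established above. This proves the single-switch case. For general switching equivalence, I would compose the maps $\Phi_{v_1}, \dots, \Phi_{v_k}$ associated to a sequence of switches carrying $G$ to $H$; each factor is a Hodge-respecting chain isomorphism, so the composite is as well, completing the proof.
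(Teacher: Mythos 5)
Your proposal is correct and follows essentially the same route as the paper: both use the map exchanging $i$ and $-i$ in every block (induced geometrically by the reflection $x_i \mapsto -x_i$ carrying $\mathcal{B}_G$ to $\mathcal{B}_H$), observe that it is a chain complex isomorphism, and conclude by noting it commutes with the $B_{r+1}$-action and hence with the idempotents $\tilde{\rho}^{(j)}$, so the Hodge summands are preserved. The paper states these verifications as ``straightforward'' and ``immediate,'' whereas you spell out the edge-preservation and commutation checks; the single minor slip (writing $-j \in s_v(A)$ where you mean $-v, j \in s_v(A)$) does not affect the argument.
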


\begin{proof}
Suppose that $H$ is obtained from $G$ by switching at vertex $i$.
It is straightforward to check that the map $f_i:C_*(\Delta_G)\rightarrow C_*(\Delta_H)$ obtained by exchanging $i$ and $-i$ in every face of $\Delta_G$ is a chain complex isomorphism; one way to see this is to recognize that $\Delta_G=B_G\cap \partial[-1,1]^n$ is taken to $\Delta_H=B_H\cap \partial[-1,1]^n$ by the map $x_i\to -x_i$, and this induces the map $f_i$ at the level of chain complexes.
Using the combinatorial description of coloring complexes given in Definition~\ref{def:colcomplex} and the action of $B_{r+1}$ on $C_r$ defined for any coloring complex, it is immediate that for any $\pi\in B_{r+1}$ we have
\[
\pi \circ f_i = f_i \circ \pi  \, ,
\]
hence
\[
\tilde{\rho}_{r+1}^{(j)}\circ f_i = f_i \circ \tilde{\rho}_{r+1}^{(j)} \, .
\]
Our lemma follows by combining this with the fact that $f_i$ is a chain complex isomorphism.
\end{proof}

\begin{theorem}\label{thm:chromhodge}
Let $G$ be a signed graph on $[n]$ with at least one edge or half-edge.
Writing
\[
\chi_G(\lambda)=\lambda^n+\sum_{j=0}^{n-1}(-1)^{n-j}c_j\lambda^j \, ,
\]
we have $\dim H_{n-2}^{(j)}(\Delta_G)=c_j$.
Equivalently,
\[
(-1)^n\left[ \chi_G(-\lambda)-(-\lambda)^n\right] =\sum_{j=0}^nc_j\lambda^j=\sum_{j=0}^n\dim H_{n-2}^{(j)}(\Delta_G)\lambda^j \, .
\]
\end{theorem}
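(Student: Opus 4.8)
The plan is to read each Hodge dimension off a reduced Euler characteristic and then match the resulting generating polynomial with Hultman's Hilbert polynomial. First I would use that $\Delta_G$ is homotopy equivalent to a wedge of $(n-2)$-spheres \cite{hu}, so its reduced homology is concentrated in degree $n-2$; since Theorem~\ref{thm:hodge} refines $H_r(\Delta_G)=\bigoplus_{j\ge 0}H_r^{(j)}(\Delta_G)$, each summand $H_r^{(j)}(\Delta_G)$ with $r\neq n-2$ vanishes as well. Working in the augmented chain complex, in which $C_{-1}(\Delta_G)=\C$ is spanned by the trivial partition and lies in Hodge degree $0$, the Euler--Poincar\'e principle applied to the subcomplex $C_*^{(j)}(\Delta_G)$ yields, for every $j$,
\[
\dim H_{n-2}^{(j)}(\Delta_G) = (-1)^n\sum_{r=-1}^{n-2}(-1)^r\dim C_r^{(j)}(\Delta_G),
\]
reducing the theorem to computing the graded face dimensions $\dim C_r^{(j)}(\Delta_G)$.

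The key structural input is that $B_{r+1}$ acts \emph{freely} on the set of $r$-faces of $\Delta_G$. Condition~(2) of Definition~\ref{def:colcomplex} guarantees that the terminal block of every face contains one of each pair $\{i,-i\}$, so each non-terminal block $P_1,\dots,P_{r+1}$ is nonempty and contains at most one element of each such pair; hence a non-terminal block is never equal to the negative of any block of the face. It follows that $\pi(P_1\mid\cdots\mid P_{r+2})=(P_1\mid\cdots\mid P_{r+2})$ forces $\pi=\id$, so $C_r(\Delta_G)$ is a free $B_{r+1}$-permutation module and $\operatorname{tr}(\pi\mid C_r(\Delta_G))=f_r\,\delta_{\pi,\id}$, where $f_r$ is the number of $r$-faces. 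Since an idempotent acts with trace equal to the dimension of its image and $\sum_{j}\lambda^j\tilde\rho_{r+1}^{(j)}=\tilde\rho_{r+1}(\lambda)$, only the identity coefficient survives, giving
\[
\sum_{j\ge 0}\dim C_r^{(j)}(\Delta_G)\,\lambda^j = \operatorname{tr}\!\big(\tilde\rho_{r+1}(\lambda)\mid C_r(\Delta_G)\big) = \frac{\prod_{k=1}^{r+1}(\lambda+2k-1)}{2^{r+1}(r+1)!}\,f_r.
\]

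Combining the two displays,
\[
\sum_{j\ge 0}\dim H_{n-2}^{(j)}(\Delta_G)\,\lambda^j = (-1)^n\sum_{r=-1}^{n-2}(-1)^r\frac{\prod_{k=1}^{r+1}(\lambda+2k-1)}{2^{r+1}(r+1)!}\,f_r,
\]
and I would finish by checking this polynomial identity at the values $\lambda=2c+1$, $c\ge 1$. There the coefficient simplifies to $\binom{c+r+1}{r+1}$, and upper negation gives $(-1)^r\binom{c+r+1}{r+1}=-\binom{-(c+1)}{r+1}$. The standard description of the Hilbert function of a Stanley--Reisner ring in terms of face numbers, together with the cone relation $f_r(\cone\Delta_G)=f_r(\Delta_G)+f_{r-1}(\Delta_G)$, identifies Hultman's Hilbert polynomial \cite{hu} as $P(t)=\sum_{r\ge -1}f_r\binom{t}{r+1}$ with $P(c)=(2c+1)^n-\chi_G(2c+1)$. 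Hence the right-hand side equals $(-1)^n\big(-P(-(c+1))\big)=(-1)^n\big[\chi_G(-(2c+1))-(-(2c+1))^n\big]$, exactly the claimed expression at $\lambda=2c+1$. Interpolation over all $c$ gives the polynomial identity, and comparing coefficients of $\lambda^j$ yields $\dim H_{n-2}^{(j)}(\Delta_G)=c_j$.

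The heart of the argument, and its main obstacle, is the freeness established in the second paragraph: it is precisely this that collapses the a priori complicated weighted sum $\operatorname{tr}(\tilde\rho_{r+1}(\lambda)\mid C_r)=\sum_{\pi\in B_{r+1}}(\cdots)\operatorname{tr}(\pi\mid C_r)$ to its single identity term, after which the proof is bookkeeping. Two points still require care: the reciprocity turning $\binom{c+r+1}{r+1}$ into an evaluation of the Hilbert polynomial at the negative argument $-(c+1)$ (which is where the substitution $\chi_G(\lambda)\leftrightarrow\chi_G(-\lambda)$ and the subtraction of $(-\lambda)^n$ originate), and the uniform treatment of the augmentation term $C_{-1}$ and of the low-dimensional case $n=2$, so that the degree-$0$ Hodge summand carrying the class of $H_0$ is accounted for correctly. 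The hypothesis that $G$ has an edge or half-edge ensures $\Delta_G$ is a nonempty wedge of spheres; switching invariance (Lemma~\ref{lem:hodgeswitch}) is not needed for this route, but provides a consistency check via deletion--contraction.
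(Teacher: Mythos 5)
Your proposal is correct, but it takes a genuinely different route from the paper. The paper proves Theorem~\ref{thm:chromhodge} by induction on the number of edges, in the style of Hanlon: it first uses Proposition~\ref{prop:chromswitchequiv} and Lemma~\ref{lem:hodgeswitch} to reduce to a positive edge or half-edge $e$, then splits $C_r(G)=D_r\oplus C_r(E)$, identifies $D_r^{(j)}$ with $C_r^{(j)}(G\setminus e)/\bigl(C_r^{(j)}(G\setminus e)\cap C_r^{(j)}(E)\bigr)$ and $C_r^{(j)}(G\setminus e)\cap C_r^{(j)}(E)$ with $C_r^{(j)}(G/e)$, and finishes with Euler--Poincar\'e and Zaslavsky's deletion--contraction recurrence, the base case requiring an explicit top cycle $\Gamma=\tilde{\rho}_{n-1}^{(n-1)}\gamma$. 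You instead compute everything in closed form, and your one genuinely new ingredient checks out: the $B_{r+1}$-action on $r$-faces is indeed free, since condition~(2) of Definition~\ref{def:colcomplex} (which persists under merging because the terminal block only grows) forbids a nonempty non-terminal block from equalling the negative of a non-terminal block, and disjointness of blocks rules out nontrivial permutations; hence $\operatorname{tr}(\pi\mid C_r)=f_r\delta_{\pi,\id}$, the trace of $\tilde{\rho}_{r+1}(\lambda)$ collapses to its identity term, and $\sum_j\dim C_r^{(j)}\lambda^j$ equals $\binom{c+r+1}{r+1}f_r$ at $\lambda=2c+1$. Combined with Euler--Poincar\'e applied to each Hodge subcomplex (legitimate, since Theorem~\ref{thm:hodge} gives subcomplexes and Hultman's shellability concentrates homology in degree $n-2$) and the upper-negation identity $(-1)^r\binom{c+r+1}{r+1}=-\binom{-(c+1)}{r+1}$, this converts Hultman's Hilbert-polynomial formula $F(k[\cone(\Delta_G)];c)=(2c+1)^n-\chi_G(2c+1)$ --- which the paper quotes but never actually uses --- into precisely the claimed reciprocity statement. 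What your route buys: no induction, no switching lemma, no chain-level deletion--contraction isomorphisms, and a transparent explanation of why $\chi_G(-\lambda)$ appears (it is combinatorial reciprocity of the Hilbert polynomial). What the paper's route buys: it is self-contained modulo Zaslavsky's recurrence (it never invokes Hultman's Corollary~5.8), and it runs parallel to Hanlon's type~A proof. Both routes rest on the same technical core, Lemma~\ref{lem:lintertwine} and Lemma~\ref{lem:intertwine}, via Theorem~\ref{thm:hodge}. Two points you flagged should indeed be made explicit but cause no trouble: the intertwining relation at $r=0$ (covered by the paper's Lemma~\ref{lem:lintertwine}, which includes $r=0$) puts the augmentation $C_{-1}=\C$ in Hodge degree $0$, and $\tilde{H}_{-1}^{(j)}=0$ for all $j$ because $\Delta_G\neq\emptyset$ under the hypothesis that $G$ has an edge or half-edge.
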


\begin{proof}
We go by induction on $n$, similar to the proof given by Hanlon~\cite[Theorem 4.1]{ha}.  

\textbf{Base Case:}
Suppose first that $E$ consists of a single half-edge; without loss of generality, we can consider this half-edge to be $\{n\}$.
Then $\Delta_G\cong S^{n-2}$, so $\dim H_{n-2}(\Delta_G)=1$.
Let $\gamma=(1|2|\cdots |n-1|-1 \, -2\, \cdots -n\, n)$; let 
\[
\Gamma:=\left[\frac{1}{2^{n-1}(n-1)!}\sum_{\sigma\in B_{n-1}}\sgn(\sigma)\sigma \right] \gamma \, .
\]
\emph{Claim:} $\partial\Gamma=0$.
Considering the application of each $d_i$ independently, we obtain
\begin{align*}
& \phantom{=} \sum_{i=1}^{n-1}(-1)^{i-1}d_i\cdot \Gamma = \sum_{i=1}^{n-1}\frac{1}{2^{n-1}(n-1)!}\sum_{\sigma\in B_{n-1}}(-1)^{i-1}d_i\sgn(\sigma)\sigma\gamma \\
& = \sum_{i=1}^{n-1}\frac{1}{2^{n-1}(n-1)!}\sum_{\sigma\in B_{n-1}}(-1)^{i-1}d_i\sgn(\sigma)(\gamma_{\sigma^{-1}(1)}|\cdots |\gamma_{\sigma^{-1}(n-1)}|\gamma_n') \\
& = \sum_{i=1}^{n-1}\frac{1}{2^{n-1}(n-1)!}(-1)^{i-1}\sum_{\sigma\in B_{n-1}}\sgn(\sigma)(\gamma_{\sigma^{-1}(1)}|\cdots  |\gamma_{\sigma^{-1}(i)}\cup\gamma_{\sigma^{-1}(i+1)}|\cdots |\gamma_{\sigma^{-1}(n-1)}|\gamma_n') \, .
\end{align*}
For $i\neq n-1$, on the terms of the sum $\sum_{\sigma\in B_{n-1}}\sgn(\sigma)d_i\sigma$, consider the involution $\sigma\to (i,i+1)\sigma$.
This yields a sign-reversing involution on the summands in the final displayed line above.

On the terms of the corresponding sum for $d_{n-1}$, consider the involution $\sigma\to I_{n-1}\sigma$, where
\[
I_{n-1}=(n-2,n-1)\cdots (2,3)(1,2)s_0(1,2)(2,3)\cdots (n-2,n-1) \, .
\]
This yields another sign-reversing involution on the summands in the final displayed line above, hence
\[
\partial\Gamma=0 \, .
\]
Since $\Gamma=\tilde{\rho}_{n-1}^{(n-1)}$, it follows that
\[
\dim H_{n-2}^{(n-1)}(\Delta_G)=1 \, .
\]
Since $\chi_G(\lambda)=(\lambda-1)\lambda^{n-1}=\lambda^n-\lambda^{n-1}$, our base case holds.

In the case where $E$ consists of only an edge, we can without loss of generality consider the edge to be $\{n-1,n\}$.
If we set $\gamma=(1|2|\cdots |\{n,n-1\}|-1\, -2\, \cdots -n)$, then the same analysis as given above holds, establishing this base case as well.

\textbf{Induction:}
Let $G$ be a signed graph with $n \geq 2$ edges, and assume by way of induction that Theorem ~\ref{thm:chromhodge} holds for any signed graph with fewer than $n$ edges.
Let $e$ be an edge of $G$; without loss of generality, we may assume $e$ is either a half-edge or a positive edge, since by Proposition~\ref{prop:chromswitchequiv} and Lemma~\ref{lem:hodgeswitch} we may switch a negative $e$ to obtain a new signed graph with the same chromatic polynomial and Hodge structure.
Let $E$ be the graph with vertex set $V(G)$ and edge set $\{e\}$.  Let $C_r(E)$ denote the space spanned by the chains $P = (P_1| \cdots |P_{r+2})$ that contain $e$.  Let $D_r$ denote the space spanned by the chains $P = (P_1| \cdots |P_{r+2})$ in $\Delta_G$ that do not contain $e$.  
It follows that
\[
C_r(G) = D_r \oplus C_r(E).
\]
Notice that the action of $B_{r+2}$ commutes with the isomorphism, and thus,
\begin{equation}\label{eqn:first}
C_r^{(j)}(G) = D_{r}^{(j)} \oplus C_r^{(j)}(E) \, .
\end{equation}

Considering $D_r$ next, we claim that 
\[
D_r \simeq C_r(G \backslash e)/(C_r(G \backslash e) \cap C_r(E)) \, .
\]
To prove this, observe that $C_r(G \backslash e)$ has as a spanning set the set of chains $P = (P_1| \cdots |P_{r+2})$ where at least one of the $P_i$ contains an edge of the graph $G \backslash e$.  
This spanning set consists of chains $P$ that contain an edge of the graph $G \backslash e$ as well as the edge $e$, and it consists of chains $P$ that contain an edge of the graph $G \backslash e$ but do not also contain the edge $e$.  
The set of all chains $P$ that contain an edge of $G \backslash e$ but not the edge $e$ form a spanning set for $C_r(G \backslash e)/(C_r(G \backslash e) \cap C_r(E))$.  
Notice that this set is also a spanning set for $D_r$, and the isomorphism then follows.  
Since the action $B_{r+2}$ commutes with this isomorphism, we have
\begin{equation}\label{eqn:second}
D_r^{(j)} \simeq C_r^{(j)}(G \backslash e)/(C_r^{(j)}(G \backslash e) \cap C^{(j)}_r(E)). \, .
\end{equation}

We finally claim that 
\begin{equation}\label{eqn:third}
C_r^{(j)}(G \backslash e) \cap C_r^{(j)}(E)\simeq C_r^{(j)}(G/e) \, .
\end{equation}
Suppose first that $e=\{a,b\}$ is a positive edge.
Consider the map sending 
\[
P=(P_1| P_2| \cdots |P_{r+1}|P_{r+2})\in C_r(G \backslash e) \cap C_r(E)
\]
to the chain $Q = (Q_1 | \cdots | Q_{r+2})\in C_{r}(G/e)$ obtained by replacing the pair $\{a,b\}$ by the symbol $a=b$ representing the contracted vertex in $G/e$.
It is straightforward that this map gives a bijection inducing our desired isomorphism between $C_r(G \backslash e) \cap C_r(E)$ and $C_r(G/e)$, because the pair of symbols $\{a,b\}$ in any basis chain $P\in C_r(G \backslash e) \cap C_r(E)$ is simply replaced by the contracted vertex symbol $a=b$.
Note that surjectivity follows since any chain $Q\in C_{r}(G/e)$ will contain an edge of $G/e$ in some block, which will by definition correspond to an edge in $G\backslash e$, and hence a preimage under our map may be found.
It is clear that this map is invariant under the hyperoctahedral group action, as $a$ and $b$ always are moved as part of the same block in both settings.

Next, suppose that $e = \{j\}$ is a half-edge, and observe that $C_r(E)$ is spanned by chains with $j,-j$ in $P_{r+2}$.  
Consider the map which takes a chain $(P_1|...|P_{r+2})\in C_r(G \backslash e) \cap C_r(E)$ and deletes the $j,-j$ in $P_{r+2}$ to obtain a chain $(P_1|...|P_{r+2}\ \{j,-j\})\in C_r(G/e)$.
We claim that this is a bijection inducing our desired isomorphism.
To prove this, note that for any chain $(Q_1|...|Q_{r+2})\in C_r(G/e)$ we can add $\{j,-j\}$ to $Q_{r+2}$ and obtain a new chain.  
This is actually a chain in the spanning set for $C_r(G\backslash e) \cap C_r(E)$; that it is in $C_r(E)$ is clear. 
To see that it is in $C_r(G \backslash e)$, we consider two possible cases.  
First, if a block $Q_k$ contains an edge in $G$ not incident to $j$, in which case this is also an edge in $G\backslash e$, our chain is a spanning element of $C_r(G \backslash e)$ and we are done.  
Second, if no block $Q_k$ contains an edge in $G/e$, then for some $i$ where $i,j$ is an edge of $G$, we must have that $i,-i$ is in $Q_{r+2}$.  
Then when we add in $j,-j$ to $Q_{r+2}$, we have all of $i,j,-i,-j$ in $Q_{r+2}$.  
This implies that $(Q_1|...|Q_{r+2}\cup \{j,-j\})$ can be obtained by merging the last two blocks in $(Q_1|...|Q_{r+1} | \{i,j\} | (Q_{r+2}\setminus \{i\}) \cup\{-j\})$, and this longer chain corresponds to a spanning element of $C_{r+1}(G \backslash e)$.  
Thus, $(Q_1|...|Q_{r+2}\cup \{i,-i\})$ must also be in the spanning set $C_r(G \backslash e)$.
It is immediate that these maps are invariant under the hyperoctahedral group action, as the final block containing $j,-j$ is always fixed by the group.

From \eqref{eqn:first}, \eqref{eqn:second}, and~\eqref{eqn:third}, it follows that
\[
\dim(C_r^{(j)}(G)) = \dim(C_r^{(j)}(G \backslash e)) - \dim(C_r^{(j)}(G/e)) + \dim(C_r^{(j)}(E)) \, .
\]

Using the fact that the reduced homology $H_*(\Delta_G)$ is only nonvanishing in top dimension, along with the Euler-Poincare identity and our inductive hypothesis, we conclude that 
\begin{align*}
 \phantom{.} & \sum_{j} \dim(H_{n-2}^{(j)}(G)) \lambda^j \\
= \phantom{.} & \sum_{j,r}(-1)^{(n-2)-r} \dim(H_r^{(j)}(G)) \lambda^{j} \\
= \phantom{.} & \sum_{j,r}(-1)^{n-2-r} \dim(C_r^{(j)}(G)) \lambda^j \\
= \phantom{.} & \sum_{j,r}(-1)^{n-2-r} (\dim(C_r^{(j)}(G\backslash e)) - \dim(C_r^{(j)}(G/e)) + \dim(C_r^{(j)}(E))\lambda^j\\
= \phantom{.} & \sum_{j} \dim(H_{n-2}^{(j)}(G\backslash e)) \lambda^j  - (-1)\sum_{j} \dim(H_{n-1}^{(j)}(G/e)) \lambda^j  + \sum_{j} \dim(H_{n-2}^{(j)}(E)) \lambda^j \\
= \phantom{.} & (-1)^n\left[(\chi_{G \backslash e}(-\lambda) - (-\lambda)^n)-(\chi_{G/e}(-\lambda) - (-\lambda)^{n-1})+(\chi_{E}(-\lambda)-(-\lambda)^n)\right] \\
= \phantom{.} & (-1)^n\left[\chi_{G \backslash e}(-\lambda) - \chi_{G/e}(-\lambda) - (-\lambda)^n\right] \\
= \phantom{.} & (-1)^n(\chi_{G}(-\lambda) - (-\lambda)^n). \\
\end{align*}
\end{proof}



\begin{thebibliography}{99}


\bibitem{bn} F. Bergeron and N. Bergeron, ``Orthogonal idempotents in the descent algebra of $B_n$ and applications,'' J. Pure Appl. Algebra, 79 (1992), 109--129.

\bibitem{bb} A. Bj\"{o}rner and F. Brenti, ``Combinatorics of Coxeter groups.'' Graduate Texts in Mathematics, 231. Springer, New York, 2005.

\bibitem{bdk} F. Breuer, A. Dall, and M. Kubitzke, ``Hypergraph coloring complexes,'' Discrete Math. 312 (2012), no. 16, 2407--2420. 


\bibitem{df} P. Diaconis and J. Fulman, ``Foulkes characters, Eulerian idempotents, and an amazing matrix,'' J. Algebraic Combin. 36 (2012), no. 3, 425--440. 

\bibitem{ga} A. Garsia, ``Combinatorics of the free Lie algebra and the symmetric group,'' Analysis, et cetera, 309--382, Academic Press, Boston, MA, 1990. 

\bibitem{gs} M. Gerstenhaber and S.D. Schack, ``A Hodge-type decomposition for commutative algebra cohomology,'' J. Pure Appl. Algebra, 48 (1987), 229--247

\bibitem{hahodge} P. Hanlon, ``The action of $S_n$ on the components of the Hodge decomposition of Hochschild homology,'' Michigan Math. J., 37 (1990) 1, 105--124.

\bibitem{ha} P. Hanlon, ``A Hodge decomposition interpretation for the coefficients of the chromatic polynomial,'' Proc. Amer. Math. Soc., 136 (2008), 3741--3749.

\bibitem{hh} P. Hanlon and P. Hersh, ``A Hodge decomposition for the complex of injective words,'' Pacific J. Math. 214 (2004), no. 1, 109--125. 

\bibitem{hs} P. Hersh and E. Swartz, ``Coloring complexes and arrangements,'' J. Algebraic Combin. 27 (2008), no. 2, 205--214. 

\bibitem{hrw} J. Herzog, V. Reiner, and V. Welker, ``The Koszul property in affine semigroup rings,'' Pacific J. Math. 186 (1998), no. 1, 39--65. 

\bibitem{hu} A. Hultman, ``Link complexes of subspace arrangements,'' European J. Combin., 28 (2007), 781--790.

\bibitem{jo} J. Jonsson, ``The topology of the coloring complex,'' J. Algebraic Combin., 21 (2005), 311--329.

\bibitem{lo} J-L Loday, ``Partition eul\'erienne et op\'erations en homologie cyclique,'' C. R. Acad. Sci. Paris S\'er. I Math., Comptes Rendus des S\'eances de l'Acad\'emie des Sciences. S\'erie I. Math\'ematique, 307 (1988), no. 7, 283--286.

\bibitem{lr} J. Long and S. Rundell, ``The Hodge structure of the coloring complex of a hypergraph,'' Discrete Math. 311 (2011), no. 20, 2164--2173. 

\bibitem{re} C. Reutenauer, ``Theorem of Poincar\'{e}-Birkhoff-Witt, logarithm and symmetric group representations of degrees equal to Stirling numbers,'' Combinatoire \'{e}num\'{e}rative (Montreal, Que., 1985/Quebec, Que., 1985), 267--284, Lecture Notes in Math., 1234, Springer, Berlin, 1986. 

\bibitem{r1} S. Rundell, ``The coloring complex and cyclic coloring complex of a complete $k$-uniform hypergraph,'' J. Combin. Theory Ser. A, 119 (2012), 1095--1109.

\bibitem{sm} E. Steingr\'{\i}msson, ``The coloring ideal and coloring complex of a graph,'' J. Algebraic Combin.,14 (2001), 73--84.

\bibitem{zas1} T. Zaslavsky, ``Signed graphs,'' Discrete Appl. Math., Vol 4, 1982, no 1, 47--74.

\bibitem{zas2} T. Zaslavsky, ``Signed graph coloring,'' Discrete Math., Vol 39, 1982, no 2, 215--228.



\end{thebibliography}
\end{document}